    \theoremstyle{plain}
   \newtheorem{thm}{Theorem}
   \newtheorem{pro}[thm]{Proposition}
   \newtheorem{cor}[thm]{Corollary}
   \theoremstyle{definition}
   \theoremstyle{remark}
   \newtheorem{rem}[thm]{{\it Remark}}
   \newtheorem*{com}{{\it Comments}}
    \newtheorem*{note}{{Note}}
   \newtheorem*{summ}{{Summing-up}}
   \def\tlabel{\label}
\DeclareMathOperator{\okr}{{\stackrel{{\scriptscriptstyle{\mathsf{def}}}}{=}}}
\DeclareMathOperator{\lin}{lin}
\DeclareMathOperator{\clolin}{clolin}
\def\dz#1{\mathcal D({#1})}
\def\funk#1#2#3{#1\colon#2\to#3}
\def\funkc#1#2#3#4#5{#1\colon#2\ni#3\mapsto#4\in#5}
\def\funkk#1#2#3#4{#1\ni#2\mapsto#3\in#4}
\def\Ge{\geqslant}
\def\gw{^*}
\def\is#1#2{\langle#1,#2\rangle}
\def\iso#1#2{\left\langle#1,#2\right\rangle}
\def\isp{\langle\;\cdot\;,-\rangle}
\def\kolo#1{\textcircled{$\scriptstyle{#1}$}}
\def\Le{\leqslant}
\def\liczp#1{{${#1}^{\text {\rm o}}$}}
\def\taggw{$\boldsymbol{\ast}$}
\def\wynik{$\implies$}
\def\zb#1#2{\{{#1}\colon\ {#2}\}}
\newcommand*\ddc{\mathcal D}
\newcommand*\eec{\mathcal E}
\newcommand*\hhc{\mathcal H}
\newcommand*\ccb{\mathbb C}
\newcommand*\nnb{\mathbb N}
\newcommand*\zzb{\mathbb Z}
\definecolor{Light}{gray}{.75}
   \def\alga{{\sf A}}
   \def\algaa{{\sf a}}
   \def\algbb{{\sf b}}
   \def\sem{{\sf S}}
   \def\sems{{\sf s}}
   \def\semt{{\sf t}}
\begin{document}


   \title[Positive definite kernels and Hilbert
   C${}^*$--modules ]{Murphy's {\em Positive definite kernels and
   Hilbert C${}^*$--modules} reorganized}
   \author[F.H. Szafraniec]{Franciszek Hugon Szafraniec}
   \address{Instytut        Matematyki,         Uniwersytet
   Jagiello\'nski, ul. \L ojasiewicza 6, 30 348 Krak\'ow, Poland}
   \email{franciszek.szafraniec@im.uj.edu.pl}
   \thanks{This work was  supported  by the MNiSzW grant
N201 026 32/1350 and at some stage also by the Dean of the Faculty
of Mathematics and Computer Science of the Jagiellonian University
research grant 23K/ZBW/000073.}
\subjclass{Primary 46L08; Secondary 46E22}
   \keywords{C${}^*$--algebra, $*$--semigroup, Hilbert
C${}^*$--module, C${}^*$--positive definite kernel, dilation
theory, KSGNS, subnormality, Sz.--Nagy general dilation theorem,
Naimark dilation, moment problem}
   \begin{abstract}
   The paper the title refers to is that in {\em Proceedings of
   the Edinburgh Mathematical Society}, {\bf 40}\,(1997), 367-374.
   Taking it as an excuse we intend to realize a twofold purpose:
   \begin{enumerate}
   \item[\liczp 1] to atomize that important result showing by the
   way connections which are out of favour, \item[\liczp 2] to
   rectify a tiny piece of history.
   \end{enumerate}
   The objective \liczp 1 is going to be achieved by adopting
means adequate to goals; it is of great gravity and this is just
Mathematics. The other, \liczp 2, comes from author's internal
need of showing how ethical values in Mathematics are getting
depreciated. The latter have nothing to do with the previous
issue; the coincidence is totally accidental.
   \end{abstract}
   \maketitle

   \section*{Reproducing kernel Hilbert C${}^*$--modules}
   \subsection*{Rudiments of the theory of Hilbert C${}^*$--modules}
    Let $\alga$ be a C${}^*$--algebra with its norm
denoted\,\footnote{\;Warning: we do not copy slavishly the
notation of \cite{lance} though this is a standard monograph of
the subject. In order to protect reader's head to be in a whirl we
put a subscript in notation of norms; inner products are less
dangerous.} by $\|\,\cdot\,\|_\alga$. An {\em inner product
$\alga$--module} is a \underbar{right} $\alga$--module $\varXi$
(with scalar multiplication compatible with this in $\varXi$ as
well as that in $\alga$) with a mapping
$$\varXi\times \varXi\ni(\xi,\eta)\longmapsto \is \xi\eta\,\in\alga$$ such that
   \begin{enumerate}
   \item[(a)] it is $\ccb$--linear in the \underbar{second}
variable; \item[{(b)}] $\is \xi{\eta\algaa}=\is \xi\eta\algaa$,
$\xi,\eta\in \varXi$, ${\sf a}\in\alga$;
   \item[{(c)}] $\is \eta\xi=\is \xi\eta^*$, $\xi,\eta\in \varXi$;
\item[{(d)}] $\is \xi\xi\Ge 0$; if $\is \xi\xi=0$ then $\xi=0$.
   \end{enumerate}
   If the second condition in (d) is dropped we call $\varXi$ {\em
semi--inner product $\alga$--module}.

   \begin{pro}[Proposition 2.3 in \cite{bill_p}]\tlabel{t1.31.1}
   Conditions {\rm (a)--(d)} above imply that
$\|\xi\|_\varXi\okr\sqrt{\|\is \xi\xi\|_\alga}$ is a norm on
$\varXi$ and\,\footnote{\;The proof in \cite{bill} caries over to
the case of semi--inner product as well, for another look at
\cite{lance}, p. 3.}
   \begin{enumerate}
   \item[(i)]
$\|\xi\algaa\|_\varXi\Le\|\xi\|_\varXi\|\algaa\|_\alga$,
$\xi\in\varXi$, $\algaa\in\alga$;
   \item[(ii)] $\is \eta\xi\is \xi\eta\Le\|\eta\|_\varXi^2\is
\xi\xi$, $\xi,\eta\in\varXi$; \item[(iii)] $\|\is
\xi\eta\|_\alga\Le
   \|\xi\|_\varXi\,\|\eta\|_\varXi$, $\xi,\eta\in\varXi$.
   \end{enumerate}
   \end{pro}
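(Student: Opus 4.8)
The plan is to make the operator Cauchy--Schwarz inequality~(ii) the load-bearing step, to read off~(iii) and then the triangle inequality from it, and to settle~(i) and the remaining norm axioms by direct computation with the axioms. Two elementary facts about the C${}^*$--algebra $\alga$ will be used repeatedly: every positive $p\in\alga$ satisfies $p\Le\|p\|_\alga\,1$ (in the unitization if $\alga$ has no unit --- a C${}^*$--embedding is order preserving, so the resulting inequalities still hold in $\alga$), and every $c\in\alga$ satisfies $\|c^*c\|_\alga=\|c\|_\alga^2$ as well as $\|\algaa^*p\algaa\|_\alga\Le\|\algaa\|_\alga^2\,\|p\|_\alga$. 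Since $\is\xi\xi\Ge0$ by~(d), the number $\|\xi\|_\varXi\okr\sqrt{\|\is\xi\xi\|_\alga}$ is well defined; combining (a) with (c) gives $\is{\lambda\xi}{\lambda\xi}=|\lambda|^2\is\xi\xi$ for $\lambda\in\ccb$, hence $\|\lambda\xi\|_\varXi=|\lambda|\,\|\xi\|_\varXi$; and when the whole of~(d) is assumed, $\|\xi\|_\varXi=0$ forces $\is\xi\xi=0$, hence $\xi=0$. For~(i), (b) and its adjoint form give $\is{\xi\algaa}{\xi\algaa}=\algaa^*\is\xi\xi\algaa$, and the estimate $\|\algaa^*p\algaa\|_\alga\Le\|\algaa\|_\alga^2\|p\|_\alga$ then yields $\|\xi\algaa\|_\varXi\Le\|\algaa\|_\alga\,\|\xi\|_\varXi$.

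The heart of the matter is~(ii), which I would establish in the form $\is\xi\eta\is\eta\xi\Le\|\eta\|_\varXi^2\is\xi\xi$. As $\ccb$ acts on $\varXi$, I may rescale so that $\|\eta\|_\varXi\Le1$, i.e. $\is\eta\eta\Le1$; the value $\is\eta\eta=0$, not reached by rescaling, will be treated separately. Put $\zeta\okr\xi-\eta\is\eta\xi\in\varXi$ and expand, using (b), (c) and $\is\eta\xi=\is\xi\eta^*$:
\[
0\Le\is\zeta\zeta=\is\xi\xi-2\,\is\xi\eta\is\eta\xi+\is\xi\eta\is\eta\eta\is\eta\xi.
\]
Sandwiching $\is\eta\eta\Le1$ between $\is\xi\eta$ and its adjoint $\is\eta\xi$ gives $\is\xi\eta\is\eta\eta\is\eta\xi\Le\is\xi\eta\is\eta\xi$; feeding this back into the identity leaves $0\Le\is\xi\xi-\is\xi\eta\is\eta\xi$, which is the normalised~(ii), and undoing the rescaling yields the general case. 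For $\is\eta\eta=0$: from $0\Le\is{\eta+\xi\algaa}{\eta+\xi\algaa}$ for every $\algaa\in\alga$, replacing $\algaa$ by $t\algaa$ with $t\to0^+$ and then by $-\algaa$, one obtains $\is\eta\xi\algaa+\algaa^*\is\xi\eta=0$ for all $\algaa$; with $\algaa=\is\xi\eta$ and $\|c^*c\|_\alga=\|c\|_\alga^2$ this forces $\is\xi\eta=0$, and~(ii) holds trivially. One can avoid the rescaling altogether by applying, for each state $\omega$ of $\alga$, the scalar Cauchy--Schwarz inequality for the positive functional $(\xi,\eta)\mapsto\omega(\is\xi\eta)$ to the auxiliary vector $\eta\is\eta\xi$, and then using that positivity in $\alga$ is detected by states.

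From~(ii) the rest is short. With $c=\is\xi\eta$ one has $\is\xi\eta\is\eta\xi=cc^*$, so
\[
\|\is\xi\eta\|_\alga^2=\|cc^*\|_\alga\Le\|\eta\|_\varXi^2\,\|\is\xi\xi\|_\alga=\|\xi\|_\varXi^2\,\|\eta\|_\varXi^2,
\]
which is~(iii). Expanding $\is{\xi+\eta}{\xi+\eta}=\is\xi\xi+\is\xi\eta+\is\eta\xi+\is\eta\eta$, passing to $\|\cdot\|_\alga$ and bounding the two cross terms by~(iii) (note $\|\is\eta\xi\|_\alga=\|\is\xi\eta\|_\alga$) gives $\|\xi+\eta\|_\varXi^2\Le(\|\xi\|_\varXi+\|\eta\|_\varXi)^2$. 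Together with the homogeneity and definiteness noted in the first paragraph, this is exactly the assertion that $\|\cdot\|_\varXi$ is a norm --- a seminorm if the second half of~(d) is discarded.

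The single genuine obstacle is~(ii): the scalar proof divides through by $\is\xi\xi$, which here need not be invertible, so the argument has to be rerouted, either through the right positive square (the vector $\xi-\eta\is\eta\xi$) or through states. Everything else is careful but routine bookkeeping with the module axioms~(a)--(d) and the elementary order and norm properties of C${}^*$--algebras.
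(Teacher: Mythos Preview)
Your argument is correct and complete. The paper itself supplies no proof of this proposition: it is quoted from Paschke, with a footnote pointing additionally to Lance, p.~3, so there is no in-paper proof to compare against. What you have written --- expand $\is\zeta\zeta$ for $\zeta=\xi-\eta\is\eta\xi$, use $\is\eta\eta\Le1$ after normalising $\eta$, then read off~(iii) and the triangle inequality --- is precisely the standard proof found in those references; your separate treatment of the degenerate case $\is\eta\eta=0$ and the alternative route via states are the usual variants.

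One remark worth making: the form of~(ii) you actually prove, $\is\xi\eta\is\eta\xi\Le\|\eta\|_\varXi^2\is\xi\xi$, is the correct Cauchy--Schwarz inequality; the factor order $\is\eta\xi\is\xi\eta$ printed in the statement is a slip. (In $\varXi=\alga=M_2(\ccb)$ with $\is ab=a^*b$, taking $\xi=e_{11}$ and $\eta=e_{12}$, the printed version would demand $e_{22}\Le e_{11}$.) The paper's own subsequent use of~(ii), e.g.\ in deriving~\eqref{3.10.2}, is with your ordering, so you are proving exactly what is actually needed.
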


 If $\varXi$ is complete with respect to the norm
$\|\,\cdot\,\|_\varXi$, it is called a {\em Hilbert
$\alga$--module} (it belongs to the category of C${}\gw$--modules
if one wants to hide \alga).

   A Hilbert $\alga$--module is a Banach space and for that reason
one can consider \underbar{bounded} linear operators between two
such spaces, $\varXi$ and $\varXi_1$ say; denote the totality of
those by ${\bf B}(\varXi,\varXi_1)$. Call a map $T\colon \varXi\to
\varXi_1$ {\em adjointable} if there is another map $T^*\colon
\varXi_1\to \varXi$ such that
   \begin{equation*}
   \is{T\xi}{\xi_1}=\is {\xi}{T^* \xi_1},\quad \xi\in
\varXi,\;\xi_1\in\varXi_1.
   \end{equation*}
   If $T$ is adjointable then it must necessarily be
$\ccb$--linear\,\footnote{\;'Linear' or in abbreviation '$\lin$'
always refers to $\ccb$--linearity. If $\alga$--has a unit, it is
needless to think separately of linearity when $\alga$--linearity
is around.} as well as $\alga$--linear, and, due to
Banach--Steinhaus theorem also bounded. Denote by ${\bf
B}^*(\varXi,\varXi_1)$ the set of all adjointable operators;
apparently ${\bf B}^*(\varXi,\varXi_1)\subset{\bf
B}(\varXi,\varXi_1)$. For further use we make a shorthand notation
${\bf B}^*(\varXi)\okr{\bf B}^*(\varXi,\varXi)$; this, with the
involution ${}^*$, is a C${}\gw$--algebra. Sometimes we may have a
need to get the C${}^*$--algebra involved in the notation; we just
put the C${}^*$--algebra in the subscript like in ${\bf
B}^*_\alga(\varXi)$.

For $\xi\in\varXi$ and $\xi_1\in\varXi_1$ define the operators
$T_{\xi,\eta}$ by and $T_{\xi}$ by
   \begin{gather}
   \begin{split}\label{2.31.1}
   T_{\xi,\xi_1}\eta\okr \xi_1\is \xi\eta\\ T_{\xi}\eta\okr\is
\xi\eta\phantom{ii}
   \end{split},\quad\quad  \eta\in\varXi.
   \end{gather}
Then $T_{\xi,\xi_1}$'s belong to ${\bf B}^*(\varXi,\varXi_1)$ and
$T_\xi$'s do to ${\bf B}^*(\varXi,\alga)$. Moreover,
$T_{\xi,\algaa}=T_{\xi\algaa^*}$. Let ${\bf K}(\varXi,\varXi_1)$
stand for the closed (in ${\bf B}(\varXi,\varXi_1)$) linear span
of all the $T_{\xi,\xi_1}$'s. It is clear that $T_{\xi,\xi_1}$ as
well as $T_\xi$ is $\alga$--linear.

Notice that $\alga$ itself is a Hilbert $\alga$--module with the
inner product
   \begin{equation*} 
   \is ab_\alga\okr a^*b,\quad a,b\in\alga
   \end{equation*}
   and the {\alga}--module norm coincides with that of the
C${}^*$--algebra $\alga$. Another thing which is worthy to mention
is that ${\bf B}^*(\alga)$ is isomorphic to $\alga$ itself, cf.
\cite{lance} p.10.
   \begin{com}
   If $\alga$ has a unit ${\sf e}$ then ${\bf
K}(\varXi,\alga)={\bf B}^*(\varXi,\alga)$, cf. \cite{lance} p.13.
This is so because $T\in{\bf B}^*(\varXi,\alga)$ is of the form
$T=T_{T\gw{{\sf e}}}$, which as such belongs to ${\bf
K}(\varXi,\alga)$. Furthermore, ${\bf K}(\varXi,\alga)$ is
precisely the set of all those bounded $\alga$-linear mappings
which are Riesz representable through an $\alga$--inner product.

   On the other hand, the Riesz representation theorem does not
apply to the members of ${\bf B}(\varXi,\alga)$; if it does
$\varXi$ is called {\em self--dual} (more on self--dual Hilbert
C${}^*$--modules is in \cite{frank}).

   There is a substantial difference in behaviour of Hilbert
C${}^*$--modules comparing to Hilbert spaces: the
orthocomplemention does not perform as involution. However, one
useful tool remains:
   \begin{quote}
   if $\is\xi\eta=0$ for fixed $\xi$ and $\eta$ ranging over a
dense subset of $\varXi$ then $\xi=0$. It is so because the inner
product is continuous according to (iii) of Proposition
\ref{t1.31.1}.
   \end{quote}

   \end{com}

   \subsection*{C${}^*$--positive definite kernels}  Let $S$ be a
   set. Call a mapping $K\colon S\times S\to\alga$ a {\em
\alga--kernel on $S$} or shortly a kernel if no confusion arises.
An $\alga$--kernel on $S$ is said to be {\em $\alga$--positive
definite} (again, positive definite if no confusion arises) if
   \begin{equation} \label{1.25.1}
   \text{$\sum_{k,l}\algaa^*_kK(s_k,s_l)\algaa_l\Ge0$ for any
finite choice of $(s_n)_n\subset S$ and
$(\algaa_n)_n\subset\alga$.}
   \end{equation}
   Using the standard quadratic form (in two complex variables)
argument we get immediately get Hermitean symmetry
   \begin{equation} \label{5.10.2}
   K(s,t)={K(t,s)\gw},\quad s,t\in S.
   \end{equation}
   Two typical Schwarz inequalities can be derived from (ii) and
(iii) of Proposition \ref{t1.31.1} later.

%
%
%

   \subsection*{Reproducing kernel Hilbert $\alga$--module:
   the construction} Let $K$ be a \alga--positive definite kernel
on $S$. Set $K_s\okr K(s,\,\cdot\,)$ for the sections and
$$\ddc_K\okr\lin\zb{K_s\algaa}{s\in S,\,\algaa\in\alga}.$$ Therefore
the members of $\ddc_K$ are of the form ${\sum_iK_{s_i}\algaa_i}$,
which means they are mappings\,\footnote{\;It seems to be
suggestive to call mappings from $S$ to {\alga} just {\alga}--{\em
functions} on $S$.} from $S$ to {\alga}. Let us try to define an
\alga--inner product on $\ddc_K$ as follows:
   \begin{multline} \label{1.30.1}
\iso{\sum\nolimits_kK_{t_k}\algbb_k}{\sum\nolimits_iK_{s_i}\algaa_i}_K
\okr\sum\nolimits_{i,k}\algbb_k^*K(t_k,s_i)\algaa_i,\\
(s_m)_m,(t_n)_n\subset S,\;(\algaa_m)_m,(\algbb_n)_n\subset\alga .
   \end{multline}
   To see the inner product is well defined notice first that
${\sum_iK_{s_i}\algaa_i}=0$ forces
$\sum_{i,k}\algbb_k^*K(t_k,s_i)\algaa_i=0$ regardless what
$\sum_{k}K_{t_k}\algbb_k$ is. Then we get in a standard way that
   \begin{multline*}
\sum\nolimits_{i,k}\algbb_k^*K(t_k,s_i)\algaa_i=\sum\nolimits_{j,l}\algbb_l'
{}^*K(t_l',s_j')\algaa_i'
   \\\text{if\; $\sum\nolimits_iK_{s_i}\algaa_i= \sum\nolimits _jK_{s'{}_j}\algaa_j$\,
    and\,
$\sum\nolimits_kK_{t_k}\algbb_k=\sum\nolimits_lK_{t'{}_l}\algbb_l$,}
   \end{multline*}
   which proves the claim\,\footnote{\;Let us notice that this
simple observation is the key to the RKHS approach to be so
exciting. Usually people, even if they decide to follow the
construction up to the very end, at this point make the argument
 rather enigmatic if any at all. It is apparently needless to say
most of the RKHS--like constructions bear hallmarks of schism.}.

   The defining formula \eqref{1.30.1} turns into the reproducing
kernel property
   \begin{equation} \label{2.30.1}
   F(s)=\is F{K_s}_K,\quad s\in S,\;F\in\ddc_K.
   \end{equation}

    It is clear that $\ddc_K$ is an {\alga}--inner product module.
Now we want to complete it still having the completion to be a
Hilbert {\alga}--module composed of \underbar{{\alga}--functions}
on $S$\,\footnote{\;At this point the reproducing kernel space
idea was abandoned in \cite{murphy}}. For this let
$\widetilde\eec$ be an arbitrary Hilbert {\alga}--module in which
$\ddc_K$ is densely imbedded via the isometry\,\footnote{\;This is
in fact an {\alga}--isometry, that is it preserves {\alga}--inner
products, as a more careful look at the argument presented on p.
10 of \cite{lance} may ensure us.} $\funkk {\ddc}F{\tilde
F}{\widetilde\eec}$. For every $G\in\widetilde\eec$ the formula
$$F_G(s)\okr\is G{K_s}_{\widetilde\eec}$$ determines, by density
of $\ddc_K$ in $\widetilde\eec$, a unique {\alga}--function $F_G$
on $S$. It is a matter of straightforward calculation to check
that $\eec_K\okr\zb{F_G}{G\in\widetilde\eec}$ is a Hilbert
{\alga}--module of {\alga}--functions on $S$ with the inner
product being an `inverse image' of that in $\widetilde\eec$.
Needless to say that $\eec_K$ does not depend on a particular
choice of $\widetilde\eec$\,\footnote{\;Though
  elements of the RKHS approach can be traced on many occasions we
  would like to advertise here \cite{ksiazka}, at least for those
  who can read it.}.

   \subsection*{Reproducing kernel Hilbert $\alga$--module:
   the properties} The first feature is the reproducing kernel
property \eqref{2.30.1} extends as
   \begin{equation} \label{2a.30.1}
   F(s)=\is F{K_s}_K,\quad s\in S,\;F\in\eec_K.
   \end{equation}
   What is important when one wants to think of any kind of
minimality is that the inner product $\alga$-module $\ddc_K$ is
already \underbar{dense} in $\eec_K$ by the construction.
%

   The {\em evaluation mapping} $\varphi_s$ at $s\in S$ given by
   \begin{equation*}
   \funkc {\varphi_s}{\eec_K}F{F(s)}\alga
   \end{equation*}
   is \alga--linear and, due to (iii) of Proposition
\ref{t1.31.1}, bounded. Moreover,
   \begin{equation*}
\is{\varphi_s(K_t)}\algaa_\alga=K_t(s)^*\algaa=K(t,s)\algaa=
\is{K_t}{K_s\algaa}_K.
   \end{equation*}
   which means $(\varphi_s)^* \algaa=K_s\algaa$. All this leads to
 what some people (including the author) may see as the archetype
 of the Kolmogorov decomposition.
   \begin{thm}\tlabel{t2.31.1}
   For every $s\in S$, $\varphi_s\in{\bf B}^*(\eec_K,\alga)$ and
$(\varphi_s)^*$ acts as $(\varphi_s)^* \algaa=K_s\algaa$.

\noindent Moreover,
   \begin{equation} \label{3.31.1}
   K(s,t)=\varphi_s(\varphi_t)^*,\quad s,t\in S.
   \end{equation}
   \end{thm}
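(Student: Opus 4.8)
The plan is to read the first assertion off the material already assembled, and then to obtain the Kolmogorov identity \eqref{3.31.1} by composing $\varphi_s$ with $(\varphi_t)^*$ and identifying the outcome inside ${\bf B}^*(\alga)\cong\alga$.

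Fix $s\in S$. We already know that $\varphi_s$ is $\alga$-linear and bounded, so everything hinges on exhibiting an adjoint, and the candidate is the map $\algaa\mapsto K_s\algaa$, which does take its values in $\ddc_K\subset\eec_K$. It remains to verify
$$\is{\varphi_s F}{\algaa}_\alga=\is{F}{K_s\algaa}_K,\qquad F\in\eec_K,\ \algaa\in\alga.$$
The quick route is to feed in the reproducing property \eqref{2a.30.1}: it exhibits $\varphi_sF=F(s)$ as an inner product against $K_s$, whereupon the left-hand side $\is{\varphi_sF}{\algaa}_\alga=(\varphi_sF)^*\algaa$ is carried, by axioms (b)--(c) of an inner product and the definition $\is ab_\alga=a^*b$, straight onto $\is{F}{K_s\algaa}_K$ — and this happens for every $F\in\eec_K$ simultaneously, so no approximation is needed. (A hands-on variant: check the identity first on the generators $K_t\algbb$ of $\ddc_K$, which is essentially the computation displayed just before the statement — it combines $(K_t\algbb)(s)=K_t(s)\algbb$, Hermitean symmetry \eqref{5.10.2} and the defining formula \eqref{1.30.1} for $\is\cdot\cdot_K$ — then spread it over $\ddc_K$ by additivity and $\ccb$-homogeneity in $F$, and finally over $\eec_K$ using boundedness of $\varphi_s$, continuity of the inner product (by (iii) of Proposition \ref{t1.31.1}) and density of $\ddc_K$.) Either way $\varphi_s\in{\bf B}^*(\eec_K,\alga)$ and $(\varphi_s)^*\algaa=K_s\algaa$.

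For \eqref{3.31.1}, apply what has just been proved with $t$ in the role of $s$: $(\varphi_t)^*\in{\bf B}^*(\alga,\eec_K)$, hence the composition $\varphi_s(\varphi_t)^*$ makes sense and, as a composition of adjointable maps, belongs to ${\bf B}^*(\alga,\alga)={\bf B}^*(\alga)$. Unravelling the definitions on an arbitrary $\algaa\in\alga$,
$$\bigl(\varphi_s(\varphi_t)^*\bigr)\algaa=\varphi_s\bigl(K_t\algaa\bigr)=(K_t\algaa)(s)=K_t(s)\,\algaa=K(s,t)\,\algaa,$$
so $\varphi_s(\varphi_t)^*$ is nothing but left multiplication by $K(s,t)$ on the Hilbert $\alga$-module $\alga$. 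Under the canonical $*$-isomorphism ${\bf B}^*(\alga)\cong\alga$ recalled earlier, which carries left multiplication by $a$ to $a$, this operator is precisely the element $K(s,t)$ of $\alga$ — and that is \eqref{3.31.1}.

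No deep difficulty is anticipated; the argument is in essence a careful unwinding of notation. The point that really has to be watched is the bookkeeping of conventions — on which side the $\alga$-coefficients sit, the placement of the involution in $\is ab_\alga=a^*b$ and in axioms (b)--(c), and Hermitean symmetry \eqref{5.10.2} — so that $K_t(s)$ and the values $\is{K_u}{K_v}_K$ come out on the correct side and the final identification inside ${\bf B}^*(\alga)\cong\alga$ produces $K(s,t)$ rather than its adjoint $K(t,s)$. A more formal caveat is that the equality $K(s,t)=\varphi_s(\varphi_t)^*$ is to be understood through that isomorphism (this is what makes the statement meaningful when $\alga$ has no unit), and that the adjoint relation, once secured on the dense submodule $\ddc_K$, indeed passes to the completion $\eec_K$.
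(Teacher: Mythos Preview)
Your argument is correct and follows the paper's own route: what you call the ``hands-on variant'' is precisely the short computation displayed immediately before the statement, and that display is all the proof the paper offers (the factorization \eqref{3.31.1} is simply recorded, without further justification). Your additions --- the direct appeal to the reproducing property \eqref{2a.30.1} for general $F\in\eec_K$, and the explicit composition yielding \eqref{3.31.1} under the identification ${\bf B}^*(\alga)\cong\alga$ --- merely spell out steps the paper leaves implicit.
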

   \begin{rem} \tlabel{t1.2.1}
    Every Hilbert \alga--module $\varXi$ is a reproducing kernel
Hilbert \alga--module over itself with the kernel $K$ defined as
   \begin{equation*}
   K(\xi,\eta)\okr\is \xi\eta_K,\quad \xi,\eta\in\varXi.
   \end{equation*}
   \end{rem}

   \begin{note}
   The amazing grace of the reproducing kernel Hilbert spaces,
when they are constructed according to the rules contained in
\cite{aro}, is in the space is composed of \underbar{functions}.
The same happens also to the Hilbert C${}^*$--modules. However,
the latter lack the RKHS test unless the module is self--dual.
This is so because the Riesz representation theorem, which is the
only reason for the test to work, fails to hold.
   \end{note}
    There is an occurrence when the RKHS \underline{test} is
certainly valid\,\footnote{\;For the Hilbert space case cf.
\cite{cue}, property ($\eta$).} too. It allows to determine
precisely which {\alga}--functions constitute the space $\eec_K$.
This highlights the extraordinary features of the reproducing
kernel spaces structure.
   \begin{pro}\tlabel{t1.22.2}
   Suppose the {\alga}--Hilbert module $\eec_K$ is selfdual. For
an $\alga$--function $F$ on $S$ the following conditions are
equivalent:
   \begin{enumerate}
   \item[\kolo 1] $F$ belongs to $\eec_K$; \item[\kolo 2] for any
finite choice of $(s_m)_m\subset S$ and $(\algaa_n)_n\subset\alga$
   \begin{equation*}
\sum_{k,l}\algaa^*_kF(s_k)\gw F(s_l)\algaa_l \Le
\sum_{k,l}\algaa^*_kK(s_k,s_l)\algaa_l.
   \end{equation*}
   \end{enumerate}
   \end{pro}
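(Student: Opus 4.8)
The plan is to recast \kolo 2 as a contractivity statement for one bounded $\alga$--linear map on $\ddc_K$, and then let self--duality promote that map to a member of $\eec_K$ which will turn out to be $F$ itself. Fix notation: for a finite system $\mathbf s=(s_1,\dots,s_n)$ in $S$ and $\mathbf a=(\algaa_1,\dots,\algaa_n)$ in $\alga$ put $g_{\mathbf s,\mathbf a}\okr\sum_l K_{s_l}\algaa_l\in\ddc_K$ and $c_{\mathbf s,\mathbf a}\okr\sum_l F(s_l)\algaa_l\in\alga$. By the defining formula \eqref{1.30.1} the right--hand side of the inequality in \kolo 2 equals $\is{g_{\mathbf s,\mathbf a}}{g_{\mathbf s,\mathbf a}}_K$, while its left--hand side is $\bigl(\sum_k F(s_k)\algaa_k\bigr)^*\bigl(\sum_l F(s_l)\algaa_l\bigr)=c_{\mathbf s,\mathbf a}^*c_{\mathbf s,\mathbf a}$. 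So \kolo 2 is literally the family of inequalities $c_{\mathbf s,\mathbf a}^*c_{\mathbf s,\mathbf a}\Le\is{g_{\mathbf s,\mathbf a}}{g_{\mathbf s,\mathbf a}}_K$. Testing it on the system obtained by juxtaposing two representations of one element of $\ddc_K$ with opposite signs makes the right side $\is00_K=0$, so the corresponding $c^*c$ is $0$ (being both $\Ge0$ and $\Le0$), hence $c=0$ in $\alga$; thus \kolo 2 guarantees that $c_{\mathbf s,\mathbf a}$ depends only on the element $g_{\mathbf s,\mathbf a}$ of $\ddc_K$, and, taking norms, that $\|c_{\mathbf s,\mathbf a}\|_\alga\Le\|g_{\mathbf s,\mathbf a}\|_{\eec_K}$.

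For \kolo 1 \wynik \kolo 2 — read \kolo 1 as carrying the normalisation $\|F\|_{\eec_K}\Le1$, which \kolo 2 forces in any case — I would use the reproducing property \eqref{2a.30.1} and axiom (b) to rewrite $c_{\mathbf s,\mathbf a}=\sum_l\is F{K_{s_l}}_K\algaa_l=\is F{g_{\mathbf s,\mathbf a}}_K$. Writing $g=g_{\mathbf s,\mathbf a}$, $c=c_{\mathbf s,\mathbf a}$ and using $\is gF_K=c^*$ (axiom (c)), expansion of the nonnegative element $\is{g-Fc}{g-Fc}_K$ gives $\is gg_K-2c^*c+c^*\is FF_Kc\Ge0$; since the positive element $\is FF_K$ has norm $\|F\|_{\eec_K}^2\Le1$ one has $c^*\is FF_Kc\Le c^*c$, and the two estimates combine to $c^*c\Le\is gg_K$, which is \kolo 2. (Equivalently, $c^*c=\is{T_{F,F}g}{g}_K$ for the positive operator $T_{F,F}\in{\bf B}^*(\eec_K)$, $T_{F,F}\eta=F\is F\eta$, whose norm is at most $\|F\|_{\eec_K}^2\Le1$, so $0\Le T_{F,F}\Le I$.)

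The substance lies in \kolo 2 \wynik \kolo 1. Define $\funk\ell{\ddc_K}\alga$ on the spanning elements by $\ell(g_{\mathbf s,\mathbf a})\okr c_{\mathbf s,\mathbf a}$; this is well defined by the first remark of the opening paragraph, is clearly $\ccb$--linear and $\alga$--linear, and is bounded of norm $\Le1$ by the second remark. Since $\ddc_K$ is dense in $\eec_K$, $\ell$ extends to $\bar\ell\in{\bf B}(\eec_K,\alga)$. Now comes the one place where self--duality enters: the Riesz test being valid for $\eec_K$, there is $H\in\eec_K$ (with $\|H\|_{\eec_K}=\|\bar\ell\|\Le1$) such that $\bar\ell=\is H{\,\cdot\,}_K$. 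Applying this to $g=K_s\algaa\in\ddc_K$ and invoking \eqref{2a.30.1} and axiom (b),
\[
F(s)\algaa=\ell(K_s\algaa)=\is H{K_s\algaa}_K=\is H{K_s}_K\algaa=H(s)\algaa\qquad(s\in S,\ \algaa\in\alga);
\]
letting $\algaa$ run through an approximate unit of $\alga$ yields $F(s)=H(s)$ for every $s$, i.e.\ $F=H\in\eec_K$, which is \kolo 1.

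The main obstacle is exactly the representability of $\bar\ell$ in the last step: without self--duality $\bar\ell$ is merely an element of the dual module ${\bf B}(\eec_K,\alga)$ and need not be an inner product against any member of $\eec_K$, and correspondingly $F$ need not belong to $\eec_K$. Everything else is routine manipulation resting on \eqref{1.30.1}, \eqref{2a.30.1} and the Hilbert C${}^*$--module arithmetic of Proposition \ref{t1.31.1}.
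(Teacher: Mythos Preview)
Your argument is correct and is precisely the standard RKHS membership--test argument the paper merely points to via \cite{cue}: recast \kolo 2 as contractivity of the $\alga$--linear functional $g_{\mathbf s,\mathbf a}\mapsto c_{\mathbf s,\mathbf a}$, extend by density, and invoke self--duality to represent it as $\is H{\,\cdot\,}_K$ with $H=F$. Your observation that \kolo 1 must be read as ``$F\in\eec_K$ with $\|F\|_{\eec_K}\Le1$'' is also right---the inequality in \kolo 2 with constant $1$ forces this, and without that normalisation the forward implication fails; this is implicit in the paper's formulation (the cited property $(\eta)$ in \cite{cue} carries the constant).
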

   \begin{proof}
   If uses the same argument as that in \cite{cue}.
   \end{proof}

   \begin{com}
   An important representative of kernels with separated
variables, as opposed to what is going to follow, is that allied
to the cosine function as well as to those alike, see for instance
\cite{niech}.
   \end{com}

    \subsection*{Semigroups in action} Theorem \ref{t2.31.1} is a ground floor version
of Murphy's Theorem 2.3. Specifying $\alga={\bf B}^*(\varXi)$,
where $\varXi$ \underbar{is} already a Hilbert C${}\gw$--module,
we may go upstairs to get precisely that Theorem. However, we
prefer still to keep moving on the ground floor and pass to
Theorem 2.4 of \cite{murphy} this route.

   Let $\sem$ be a multiplicative semigroup of \underbar{left}
actions on $S$. Let us define two operators in $\eec_K$ related to
a given $\sems$. For $F\in\eec_K$ and $\sems\in\sem$ define first
the $\sems$'s translate $F_{[\sems]}$ of $F$ as $F_{[\sems]}\okr
F(\sems s)$, $s\in S$. The operator $\varPsi_K(\sems)$ is well
defined by
   \begin{equation*}
\dz{\varPsi_K(\sems)}\okr\zb{F\in\eec_K}{F_{[\sems]}\in\eec_K},\quad
\varPsi_K(\sems)F\okr F_{[\sems]}.
   \end{equation*}
   $\varPsi_K(\sems)$ may be an unbounded operator with domain
$\dz{\varPsi_K(\sems)}$ different from $\eec_K$. The other
operator, $\varPhi_K(\sems)$ may not be well defined, if it is it
is always densely defined
   \begin{equation}\label{3.16.2}
   \dz{\varPhi_K(\sems)}\okr\ddc_K,\quad
   \varPhi_K(\sems)\sum\nolimits_iK_{ s_i}\algaa_i=
   \sum\nolimits_iK_{\sems s_i}\algaa_i,\quad (s_i)_i\subset S.
   \end{equation}

   The reproducing kernel property \eqref{2.30.1} implies
   \begin{equation*} 
   \is{F}{K_{\sems s}}_K=\is{\varPsi_K(\sems)F}{K_{s}}_K,\quad
   F\in\ddc(\varPsi_K(\sems)),\; s\in S
   \end{equation*}
   and this in turn helps to prove the following
   \begin{pro}\tlabel{t1.1.2}
   $\varPsi_K(\sems)$ is a closed operator. If $\varPsi_K(\sems)$
is densely defined, then $\varPhi_K(\sems)$ is well defined and
$\varPhi_K(\sems)\gw=\varPsi_K(\sems)$, and {\em vice versa}.
   \end{pro}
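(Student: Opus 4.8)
The plan is to read the whole statement off the relation displayed immediately before it, $\is{F}{K_{\sems s}}_K=\is{\varPsi_K(\sems)F}{K_{s}}_K$ for $F\in\dz{\varPsi_K(\sems)}$, $s\in S$, together with the extended reproducing property \eqref{2a.30.1} and the elementary algebra of the $\alga$-valued inner product. \emph{Closedness} comes first: if $F_n\in\dz{\varPsi_K(\sems)}$ with $F_n\to F$ and $\varPsi_K(\sems)F_n\to G$ in $\eec_K$, then, the inner product being jointly continuous by Proposition \ref{t1.31.1}(iii), one passes to the limit in that relation to get $\is{F}{K_{\sems s}}_K=\is{G}{K_s}_K$ for every $s$; by \eqref{2a.30.1} this reads $F(\sems s)=G(s)$, i.e. $F_{[\sems]}=G\in\eec_K$, so $F\in\dz{\varPsi_K(\sems)}$ and $\varPsi_K(\sems)F=G$.

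Next comes the identity on which the rest rests: for $F\in\dz{\varPsi_K(\sems)}$ and any finite system $(s_i)_i\subset S$, $(\algaa_i)_i\subset\alga$,
\begin{equation*}
\iso{\sum\nolimits_i K_{\sems s_i}\algaa_i}{F}_K=\iso{\sum\nolimits_i K_{s_i}\algaa_i}{\varPsi_K(\sems)F}_K ,
\end{equation*}
which needs no well-definedness hypothesis because, by \eqref{2a.30.1} and properties (b), (c), both sides collapse to $\sum_i\algaa_i\gw F(\sems s_i)\gw$ (use $\is{K_{\sems s_i}}{F}_K=F(\sems s_i)\gw=(\varPsi_K(\sems)F)(s_i)\gw=\is{K_{s_i}}{\varPsi_K(\sems)F}_K$ and $\varPsi_K(\sems)F=F_{[\sems]}$). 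If now $\varPsi_K(\sems)$ is densely defined and $\sum_i K_{s_i}\algaa_i=0$ in $\eec_K$, the right-hand side is $0$ for all $F$ in the dense set $\dz{\varPsi_K(\sems)}$; by the remark in the Comments (an inner product against a dense set detects $0$) this forces $\sum_i K_{\sems s_i}\algaa_i=0$, so $\varPhi_K(\sems)$ is well defined.

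Assuming $\varPhi_K(\sems)$ well defined — which now holds, and is the standing hypothesis for the converse — the identity reads $\is{\varPhi_K(\sems)g}{F}_K=\is{g}{\varPsi_K(\sems)F}_K$ for $g\in\ddc_K=\dz{\varPhi_K(\sems)}$, $F\in\dz{\varPsi_K(\sems)}$, that is $\varPsi_K(\sems)\subseteq\varPhi_K(\sems)\gw$. For the reverse inclusion take $F\in\dz{\varPhi_K(\sems)\gw}$, put $G\okr\varPhi_K(\sems)\gw F\in\eec_K$, and test the adjoint relation on $g=K_s\algaa$: by (b), (c), \eqref{2a.30.1} this gives $\algaa\gw F(\sems s)\gw=\algaa\gw G(s)\gw$ for all $\algaa\in\alga$ and $s\in S$; writing $c\okr F(\sems s)\gw-G(s)\gw$ one has $\algaa\gw c=0$ for all $\algaa$, and taking $\algaa=c$ and invoking $\|c\gw c\|_\alga=\|c\|_\alga^2$ gives $c=0$, hence $F(\sems s)=G(s)$ for all $s$, i.e. $F_{[\sems]}=G\in\eec_K$. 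So $F\in\dz{\varPsi_K(\sems)}$ with $\varPsi_K(\sems)F=G$, whence $\varPhi_K(\sems)\gw=\varPsi_K(\sems)$; since this paragraph used only well-definedness of $\varPhi_K(\sems)$, it proves the converse as well (and, incidentally, re-proves closedness, adjoints being closed).

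The main obstacle is not a single estimate but the C${}^*$-module bookkeeping, where a Hilbert-space reflex misleads: the coefficients must be carried through the correct slot with the correct involution, non-unitality of $\alga$ bars inserting a unit, and the two points where one would take an orthogonal projection must instead go through the ``vanishing against a dense set'' remark and the $c\gw c=0$ cancellation. The genuine hinge is the middle step, where well-definedness of $\varPhi_K(\sems)$ is extracted from density of $\dz{\varPsi_K(\sems)}$ by continuity of the $\alga$-valued inner product.
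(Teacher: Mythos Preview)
Your proof is correct and follows precisely the route the paper indicates: the paper gives no proof beyond pointing to the identity $\is{F}{K_{\sems s}}_K=\is{\varPsi_K(\sems)F}{K_{s}}_K$ as the tool, and you have unpacked that hint faithfully --- closedness via continuity of the inner product, well-definedness of $\varPhi_K(\sems)$ from density of $\dz{\varPsi_K(\sems)}$ through the ``vanishing against a dense set'' remark, and the adjoint equality by testing on $K_s\algaa$ with the $c^*c=0$ cancellation. Nothing to add.
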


   In principle, $\varPsi(\sems)$ may not be densely defined while
$\varPhi(\sems)$ may not be well defined as an operator. In this
paper we are interested exclusively in the case when these two
operators are bounded. This is case described as follows.
   \begin{pro}\tlabel{t14.11.2}
   $\varPhi_K(\sems)$ is a well defined bounded operator if and
   only if there is a \underbar{number} $c(\sems)\Ge0$ such that
   \begin{equation} \label{14.11.3}
\big\|\sum\nolimits_{i,j}\algaa_i^*K(\sems s_i,\sems
s_j)\algaa_j\big\|_\alga\Le c(\sems)
\big\|\sum\nolimits_{i,j}\algaa_i^*K(s_i,s_j)\algaa_j\big\|_\alga,\quad
(s_m)_m\subset S,\;(\algaa_m)_m\subset\alga .
   \end{equation}
   If this happens then $\varPsi_K(\sems)$ is a densely defined
   bounded operator and
   $\|\varPsi_K(\sems)\|=\|\varPhi_K(\sems)\|\Le c(\sems)$;
   keeping the same notation for the extensions of these operators
   we have
   \begin{equation*}
   \text{$\varPhi_K(\sems)=\varPsi_K(\sems)^*$ and
$\varPsi_K(\sems)$ as well as $\varPhi_K(\sems)$ are in ${\bf
B}^*(\eec_K)$.}
   \end{equation*}
   Moreover, the mapping
   \begin{equation} \label{3.2.2}
   \funkc {\varPhi_K}\sem\sems{\varPhi_K(\sems)}{{\bf
B}^*(\eec_K)}
   \end{equation}
   is multiplicative while the mapping
   \begin{equation} \label{1.13.2}
   \funkc {\varPsi_K}\sem\sems{\varPsi_K(\sems)}{{\bf
B}^*(\eec_K)}
   \end{equation}
   is antimultiplicative.

   \noindent If $\sem$ is unital with unit ${\sf 1}$,
$\varPhi({\sf 1})=\varPsi({\sf 1})={\sf 1}_{\eec_K}=\text{the
identity operator in $\eec_K$}$.
   \end{pro}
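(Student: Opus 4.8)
The plan is to reduce everything to the reproducing kernel formula \eqref{2.30.1} and the definition \eqref{3.16.2} of $\varPhi_K(\sems)$ on the dense submodule $\ddc_K$. First I would establish the equivalence with \eqref{14.11.3}. Suppose $\varPhi_K(\sems)$ is well defined and bounded on $\ddc_K$. For a finite selection $(s_i)_i$, $(\algaa_i)_i$ put $F\okr\sum_iK_{s_i}\algaa_i$; then $\varPhi_K(\sems)F=\sum_iK_{\sems s_i}\algaa_i$, and applying the norm identity $\|G\|_{\eec_K}^2=\|\is GG_K\|_\alga$ together with the defining formula \eqref{1.30.1} gives
\[
\big\|\sum\nolimits_{i,j}\algaa_i^*K(\sems s_i,\sems s_j)\algaa_j\big\|_\alga
=\|\varPhi_K(\sems)F\|_{\eec_K}^2
\Le\|\varPhi_K(\sems)\|^2\,\|F\|_{\eec_K}^2
=\|\varPhi_K(\sems)\|^2\big\|\sum\nolimits_{i,j}\algaa_i^*K(s_i,s_j)\algaa_j\big\|_\alga,
\]
so \eqref{14.11.3} holds with $c(\sems)=\|\varPhi_K(\sems)\|^2$. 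The one subtlety worth a sentence is that $\varPhi_K(\sems)$ is \emph{well defined} precisely when $F=0$ in $\ddc_K$ forces $\sum_iK_{\sems s_i}\algaa_i=0$; and \eqref{14.11.3} (read for a single combined family, the difference of two representations of $F$) delivers exactly this, which is why the ``only if'' direction is stated without a separate hypothesis. Conversely, if \eqref{14.11.3} holds, the same chain of equalities run backwards shows $\varPhi_K(\sems)$ is well defined and $\|\varPhi_K(\sems)F\|_{\eec_K}\le\sqrt{c(\sems)}\,\|F\|_{\eec_K}$ on $\ddc_K$, hence extends by density (Proposition \ref{t1.31.1}(iii) guarantees the inner product, and so the norm, is continuous) to a bounded operator on $\eec_K$ with $\|\varPhi_K(\sems)\|\Le\sqrt{c(\sems)}$.

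Next I would identify the adjoint. On generators, using \eqref{2.30.1} and \eqref{3.16.2},
\[
\is{\varPhi_K(\sems)K_t}{F}_K=\is{K_{\sems t}}{F}_K=F(\sems t)=F_{[\sems]}(t)=\is{F_{[\sems]}}{K_t}_K=\is{\varPsi_K(\sems)F}{K_t}_K
\]
for $F\in\dz{\varPsi_K(\sems)}$; by $\ccb$--linearity and the $\alga$--module property this reads $\is{\varPhi_K(\sems)G}{F}_K=\is G{\varPsi_K(\sems)F}_K$ for $G\in\ddc_K$. Since $\varPhi_K(\sems)$ is now bounded and $\ddc_K$ is dense, this persists for all $G\in\eec_K$; and once we know (from the half of Proposition \ref{t1.1.2} we are entitled to invoke) that $\varPsi_K(\sems)$ is then densely defined and, being bounded by \eqref{14.11.3} applied symmetrically, extends to all of $\eec_K$, the relation says $\varPhi_K(\sems)=\varPsi_K(\sems)^*$. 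In particular both are adjointable, so they lie in ${\bf B}^*(\eec_K)$; the norm equality $\|\varPsi_K(\sems)\|=\|\varPhi_K(\sems)\|$ is then the standard fact that an adjointable operator and its adjoint have equal norm. Combining with the two one--sided bounds gives $\|\varPsi_K(\sems)\|=\|\varPhi_K(\sems)\|\Le c(\sems)$ once $c(\sems)$ is taken as the optimal constant; with the square root convention above one simply records $\Le c(\sems)$ as stated (the constant in \eqref{14.11.3} need not be sharp).

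For multiplicativity I would check it on the dense generating set. Since $\sem$ acts on the \underbar{left} and the translate of $K_s$ by $\sems$ is $K_{\sems s}$, we get $\varPhi_K(\sems\semt)K_s=K_{\sems\semt s}=\varPhi_K(\sems)K_{\semt s}=\varPhi_K(\sems)\varPhi_K(\semt)K_s$; extending by linearity over $\ddc_K$ and then by density and boundedness yields $\varPhi_K(\sems\semt)=\varPhi_K(\sems)\varPhi_K(\semt)$ on $\eec_K$. Taking adjoints and using $\varPhi_K=\varPsi_K^*$ flips the order, giving $\varPsi_K(\sems\semt)=\varPsi_K(\semt)\varPsi_K(\sems)$, i.e.\ $\varPsi_K$ is antimultiplicative. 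Finally, if $\sem$ has unit ${\sf 1}$ then ${\sf 1}s=s$, so $\varPhi_K({\sf 1})K_s=K_s$ on the generators and hence $\varPhi_K({\sf 1})={\sf 1}_{\eec_K}$; the same for $\varPsi_K$ from the translate formula, or by taking adjoints. I expect the only real friction point to be the bookkeeping around \emph{well--definedness}: one must be careful that inequality \eqref{14.11.3}, when the two finite families encode the difference of two representations of the same element of $\ddc_K$, is what legitimizes passing from the formal prescription \eqref{3.16.2} to an honest operator — everything after that is routine density and adjoint juggling.
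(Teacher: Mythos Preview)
Your argument is correct and is exactly the line the paper intends: the paper does not supply a formal proof of this proposition, only the {\em Comment} that condition \eqref{14.11.3} simultaneously secures well--definedness and boundedness of $\varPhi_K(\sems)$, together with the identity $\is{F}{K_{\sems s}}_K=\is{\varPsi_K(\sems)F}{K_s}_K$ displayed just before Proposition \ref{t1.1.2}; your write--up simply fleshes these hints out. One small convention slip: with the paper's inner product linear in the \emph{second} variable and reproducing formula $F(s)=\is{F}{K_s}_K$, your chain should read $\is{F}{\varPhi_K(\sems)K_t}_K=\is{F}{K_{\sems t}}_K=F(\sems t)=\is{\varPsi_K(\sems)F}{K_t}_K$ rather than with the arguments swapped --- the conclusion $\varPhi_K(\sems)=\varPsi_K(\sems)^*$ is unaffected.
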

   \begin{com}
   The important notice is one of the responsibilities of the
boundedness condition \eqref{14.11.3} is to ensure the operator
$\varPhi(\sems)$ to be \underbar{well} \underbar{defined}; it goes
together unnoticeably with boundedness of $\varPhi(\sems)$. These
two matters make up a juncture.
   \end{com}

    Now the Kolmogorov factorization \eqref{3.31.1} resembles more
than some people still would like to have.
   \begin{cor} \tlabel{t2.2.1}
   Suppose \eqref{14.11.3} holds. Then for $\sems,\semt\in\sem$,
$s,t\in S$
   \begin{gather}\label{1.14.2}
   K(\sems \,s,\semt\, t)=\is{\varPhi_K(\sems)K_s}
{\varPhi_K(\semt)K_t}_K,
   \\\notag K(s,\semt\, t)=\is{K_s}
{\varPhi_K(\semt)K_t}_K .
   \end{gather}
   \end{cor}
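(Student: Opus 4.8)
The plan is to reduce both equalities to the defining formula \eqref{1.30.1} for the inner product on $\ddc_K$ --- equivalently, to the reproducing kernel property \eqref{2a.30.1} --- after settling one preliminary point: how $\varPhi_K(\sems)$ acts on a single section. I claim that, under \eqref{14.11.3},
\[
\varPhi_K(\sems)K_s=K_{\sems s},\qquad s\in S,\ \sems\in\sem .
\]
If $\alga$ is unital this is merely \eqref{3.16.2} read off for the one--term sum $K_s=K_s{\sf e}\in\ddc_K$. In general $K_s$ still lies in $\eec_K$, being the $\|\,\cdot\,\|_{\eec_K}$--limit of the net $K_s\algaa_\lambda\in\ddc_K$ along an approximate unit $(\algaa_\lambda)$ of $\alga$: the estimate $\iso{K_s(\algaa_\lambda-\algaa_\mu)}{K_s(\algaa_\lambda-\algaa_\mu)}_K=(\algaa_\lambda-\algaa_\mu)^*K(s,s)(\algaa_\lambda-\algaa_\mu)$ (read off from \eqref{1.30.1}) shows the net is Cauchy, and continuity of the evaluation maps identifies its limit with the $\alga$--function $K_s$. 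Applying \eqref{3.16.2} termwise and using that the extension of $\varPhi_K(\sems)$ to $\eec_K$ is bounded by Proposition \ref{t14.11.2} then gives $\varPhi_K(\sems)K_s=\lim_\lambda K_{\sems s}\algaa_\lambda=K_{\sems s}$. This is the one place where a little care is needed.

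With this in hand the first identity \eqref{1.14.2} is immediate:
\[
\iso{\varPhi_K(\sems)K_s}{\varPhi_K(\semt)K_t}_K
 =\iso{K_{\sems s}}{K_{\semt t}}_K
 =K(\sems\,s,\semt\, t),
\]
the last step being the reproducing property \eqref{2a.30.1} applied to $F=K_{\sems s}\in\eec_K$ at the point $\semt\, t\in S$ (equivalently, \eqref{1.30.1} specialised to one--term sums). The second identity comes out the same way, now leaving the left argument untouched:
\[
\iso{K_s}{\varPhi_K(\semt)K_t}_K=\iso{K_s}{K_{\semt t}}_K=K(s,\semt\, t).
\]

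I would also point out what the argument does \emph{not} use: neither the multiplicativity of $\varPhi_K$ nor the adjoint relation $\varPhi_K(\sems)=\varPsi_K(\sems)^*$ from Proposition \ref{t14.11.2} enters here --- only the values of $\varPhi_K(\sems)$ on the generating sections. In fact the Corollary is nothing but the Kolmogorov factorization $K(u,v)=\varphi_u(\varphi_v)^*$ of Theorem \ref{t2.31.1}, evaluated at $u=\sems\,s$ and $v=\semt\, t$ and transcribed through $\varPhi_K$ by way of $(\varphi_u)^*\algaa=K_u\algaa$; carrying it out along those lines is a viable alternative, but the direct computation above is shorter. The only genuine obstacle is therefore the bookkeeping behind $\varPhi_K(\sems)K_s=K_{\sems s}$ for the \emph{extended} operator; the two displayed identities themselves are pure substitution.
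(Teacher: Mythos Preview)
Your proof is correct and follows the natural route the paper has in mind: the Corollary is stated without proof, as an immediate consequence of the definition \eqref{3.16.2} together with the reproducing identity $\langle K_u,K_v\rangle_K=K(u,v)$. Your explicit verification that $\varPhi_K(\sems)K_s=K_{\sems s}$ for the \emph{extended} operator, via an approximate unit of $\alga$, is more careful than anything the paper spells out (the paper writes $\langle F,K_s\rangle_K$ throughout as if $K_s\in\eec_K$ is unproblematic), and is the right thing to do in the non--unital case; the remainder is, as you say, pure substitution.
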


    The appearance of the homomorphism $\varPhi_K$ makes the
$\alga$--module a \underbar{kind} of {\em C${}^*$--correspondence}
in the sense of \cite{muhly}. We come to this notion closer as we
specify more $S$ and $\sem$.
    \subsection*{Involution in {\sem} added} Suppose $\sem$ has an involution and the action
of {\sem} is {\em transitive} with respect to the kernel $K$ or
the kernel $K$ is {\sem}--{\em invariant}, which means anyway that
   \begin{equation*} 
   K(s,\sems\, t)=K(\sems^*s,t ),\quad \sems\in\sem,\;s,t\in S.
   \end{equation*}
   \begin{cor} \tlabel{t2.2.2}
    Suppose \eqref{14.11.3} holds. Then
$\varPsi_K(s^*)=\varPhi_K(s)$ and the homomorphism $\varPhi$ as in
\eqref{3.2.2} becomes a $*$--homomorphism while the other
$\varPsi$, that in \eqref{1.13.2}, is anti--$*$--homomorphism.

   \end{cor}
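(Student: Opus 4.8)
The plan is to verify the identity $\varPsi_K(s^*)=\varPhi_K(s)$ first on the generating sections $K_u$, $u\in S$, then to propagate it to all of $\eec_K$ by $\alga$--linearity together with a density argument, and finally to read off the two homomorphism assertions by combining this identity with what Proposition \ref{t14.11.2} already provides.

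First I would fix $u\in S$ and unravel the definitions: for every $v\in S$ one has $\big(\varPsi_K(s^*)K_u\big)(v)=K_u(s^*v)=K(u,s^*v)$. Here the $\sem$--invariance of $K$ enters decisively --- applying the displayed invariance identity $K(s,\sems\,t)=K(\sems^*s,t)$ with $\sems=s^*$ and using the involution law $(s^*)^*=s$ gives $K(u,s^*v)=K(su,v)=K_{su}(v)$. Hence $\varPsi_K(s^*)K_u=K_{su}=\varPhi_K(s)K_u$, the last equality being the defining formula \eqref{3.16.2} for $\varPhi_K$; incidentally this shows $K_u$ lies in the domain of $\varPsi_K(s^*)$. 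Since the translation map $F\mapsto F_{[\sems]}$ is additive and commutes with right multiplication by $\alga$, the domain of $\varPsi_K(s^*)$ is an $\alga$--submodule on which $\varPsi_K(s^*)$ is $\alga$--linear, so the equality $\varPsi_K(s^*)=\varPhi_K(s)$ extends from the $K_u$'s to all of $\ddc_K=\lin\zb{K_u\algaa}{u\in S,\ \algaa\in\alga}$. Because \eqref{14.11.3} is in force, Proposition \ref{t14.11.2} makes both $\varPsi_K(s^*)$ and $\varPhi_K(s)$ members of ${\bf B}^*(\eec_K)$; two bounded operators agreeing on the dense submodule $\ddc_K$ agree everywhere, so $\varPsi_K(s^*)=\varPhi_K(s)$ on $\eec_K$.

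For the rest, recall from Proposition \ref{t14.11.2} that $\varPhi_K$ is multiplicative, $\varPsi_K$ is antimultiplicative, and $\varPhi_K(\sems)=\varPsi_K(\sems)^*$ for every $\sems\in\sem$. Feeding the identity just proved into the last relation, $\varPsi_K(s^*)=\varPhi_K(s)=\varPsi_K(s)^*$, which together with antimultiplicativity is exactly the statement that $\varPsi_K$ is an anti-$*$-homomorphism; replacing $s$ by $s^*$ in the identity gives $\varPsi_K(s)=\varPhi_K(s^*)$, hence $\varPhi_K(s^*)=\varPsi_K(s)=\big(\varPsi_K(s)^*\big)^*=\varPhi_K(s)^*$, which together with multiplicativity says $\varPhi_K$ is a $*$-homomorphism. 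The one place that wants genuine care --- and the only real, if mild, obstacle --- is the passage from $\ddc_K$ to $\eec_K$: it relies on both operators being honestly bounded on all of $\eec_K$, which is precisely what the standing hypothesis \eqref{14.11.3}, read for every element of $\sem$ (in particular for $s$ and for $s^*$), secures via Proposition \ref{t14.11.2}; absent it, the identification would be merely a statement about possibly unbounded operators on the incomplete module $\ddc_K$.
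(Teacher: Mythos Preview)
Your argument is correct. The paper states this corollary without proof, treating it as an immediate consequence of the $\sem$--invariance identity $K(s,\sems\,t)=K(\sems^*s,t)$ together with Proposition~\ref{t14.11.2}; your write-up supplies exactly the expected details --- the verification on the sections $K_u$, the extension to $\ddc_K$ by $\alga$--linearity, and the passage to $\eec_K$ via the boundedness granted by \eqref{14.11.3} --- and then reads off the $*$--compatibility from $\varPhi_K(\sems)=\varPsi_K(\sems)^*$ in the standard way.
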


    Now we have to change the \underbar{meaning}
   \begin{center}
   $S=\sem$ a $*$--semigroup, \\
$K(\sems,\semt)=\omega(\sems^*\semt)$ with $\funk\omega\sem\alga$
   \end{center}
   and the \underbar{notation}
   \begin{center}
   $\ddc_\omega$, $\eec_\omega$, $\omega_\sems$, $\isp_\omega$,
$\varPhi_\omega$ and $\varPsi_\omega$\\ instead
of\\
$\ddc_K$, $\eec_K$, $K_\sems$, $\isp_K$, $\varPhi_K$ and
$\varPsi_K$ .
   \end{center}
   According to \eqref{1.25.1}, $\alga$--positive definiteness of
$\omega$ means,
    \begin{equation*} 
\text{$\sum_{k,l}\algaa^*_k\omega(\sems_k^*\sems_l)\algaa_l\Ge0$
for any finite choice of $(\sems_n)_n\subset \sem$ and
$(\algaa_n)_n\subset\alga$}
   \end{equation*}
   and the decomposition \eqref{1.14.2} (and together with the
defining formula \eqref{3.16.2}) takes the form
   \begin{equation*}
\omega(\sems_1^*\sems_2^*\semt_2\semt_1)=
\is{\varPhi_\omega(\sems_2)\omega_{\sems_1}}
{\varPhi_\omega(\semt_2)\omega_{{\semt}_1}}_\omega=
\is{\omega_{\sems_2\sems_1}}{\omega_{\semt_2\semt_1}}_\omega,
\quad \sems_1,\sems_2,\semt_1,\semt_2\in\sem.
   \end{equation*}
   Via the reproducing kernel Hilbert $\alga$-module construction,
the inequalities (ii) and (iii) of Proposition
   \ref{t1.31.1}, which we are going to use, now take the form
   \begin{multline}  \label{3.10.2}
    \big(\sum\nolimits_{i,k}\algaa_i^*\omega
    (\sems_i^*\semt_k)\algbb_k\big)^*
\sum\nolimits_{i,k}\algaa_i^*\omega
    (\sems_i^*\semt_k)\algbb_k \\\Le
    \big\|\sum\nolimits_{i,j}\algaa_i^*\omega
    (\sems_i^*\sems_j)\algaa_j\big\|_\alga
    \,\sum\nolimits_{k,l}\algbb_k^*\omega
    (\semt_k^*\semt_l)\algbb_l,
   \\(\sems_m)_m,(\semt_n)_n\subset \sem ,\;(\algaa_m)_m,
(\algbb_n)_n \subset\alga
   \end{multline}
   and
    \begin{multline} \label{1.10.2}
    \big\|\sum\nolimits_{i,k}\algaa_i^*\omega
    (\sems_i^*\semt_k)\algbb_k\big\|_\alga^2 \Le
    \big\|\sum\nolimits_{i,j}\algaa_i^*\omega
    (\sems_i^*\sems_j)\algaa_j\big\|_\alga\,
    \big\|\sum\nolimits_{k,l}\algbb_k^*\omega
    (\semt_k^*\semt_l)\algbb_l\big\|_\alga,
   \\(\sems_m)_m,(\semt_n)_n\subset \sem ,\;(\algaa_m)_m,
(\algbb_n)_n \subset\alga.
   \end{multline}
    Notice that
   \begin{equation} \label{1.11.2}
   \big\|\sum\nolimits_{i,j}\algaa_i^*\omega
(\sems_i^*\sems_j)\algaa_j\big\|_\alga=\big\|\iso{\sum\nolimits_{i}
\omega_{\sems_i}\algaa_i}{\sum\nolimits_{i}
\omega_{\sems_i}\algaa_i}\big\|_\alga=\big\|\sum\nolimits_{i}
\omega_{\sems_i}\algaa_i\big\|_{\omega}^2.
   \end{equation}

   \begin{rem} \tlabel{t2.22.2}
   If $\omega\in\eec_\omega$ then
$\varPhi(\sems)\omega=\omega_\sems$ for all $\sems$ regardless
$\sem$ or $\alga$ is unital or not. Indeed, for any $\semt\in\sem$
   \begin{equation*}
\is{\varPsi(\sems)\omega}{\omega_\semt}_\omega=\is{\omega_{[s]}}
{\omega_\semt}=\omega(\sems\semt)=\omega_{\sems^*}(\semt)=
\is{\omega_{\sems^*}}{\omega_\semt}.
   \end{equation*}
   Because $\varPhi(\sems)=\varPsi(\sems^*)$ and $\omega_\semt$'s
span $\eec_\omega$, we get it.
   \end{rem}

    \section*{Towards the KSGNS theorem}
     \subsection*{The boundedness condition, several versions} We turn to the {\em boundedness condition}
     \eqref{14.11.3} which under the current circumstances takes
     the form (a) below.
   \begin{pro}\tlabel{t1.10.1}
   The following conditions are equivalent:
   \begin{enumerate}
   \item[(a)] for every $\sems\in\sem$ there is a constant
   $c(\sems)\Ge0$ such that
   \begin{equation*}
\big\|\sum\nolimits_{i,j}\algaa_i^*\omega
(\sems_i^*\sems^*\sems\sems_j)\algaa_j\big\|_\alga\Le
c(\sems)\big\|
\sum\nolimits_{i,j}\algaa_i^*\omega(\sems_i^*\sems_j)\algaa_j\big\|_\alga
,\quad (\sems_m)_m\subset \sem,\;(\algaa_m)_m\subset\alga;
   \end{equation*}
   \item[(b)] for every $\sems\in\sem$ there is a constant
   $c(\sems)\Ge0$ such that
   \begin{equation*}
\big\|\algaa^*\omega (\semt^*\sems^*\sems\semt)\algaa\big\|_\alga
\Le c(\sems)\big\| \algaa^*\omega(\semt^*\semt)\algaa\big\|_\alga
,\quad \semt\in \sem,\;\algaa\in\alga;
   \end{equation*}
   \item[(c)] there is a submultiplicative function $\funk
   c\sem{[0,+\infty)}$ such that
   \begin{equation*}
   \big\|\algaa^*\omega
   (\semt^*\sems^*\sems\semt)\algaa\big\|_\alga\Le
   d(\semt,\algaa)c(\sems),\quad \sems,\semt\in
   \sem,\;\algaa\in\alga;
   \end{equation*}
   \item[(d)] $\liminf_n\big\|\sum\nolimits_{i,j}\algaa_i^*\omega
   (\sems_i^*(\sems^*\sems)^{2^n}\sems_j)\algaa_j\big\|_\alga^{2^{-n}}$
   is finite and does not depend on the choice of $(\sems_n)_m$
   and $(\algaa_m)_m$.
   \end{enumerate}
   \end{pro}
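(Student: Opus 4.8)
The plan is to run the cycle $(a)\Rightarrow(b)\Rightarrow(c)\Rightarrow(d)\Rightarrow(a)$, staying throughout at the level of $\omega$ itself. Two elementary observations carry most of the load. First, $\sems^*\sems$ is \emph{self--adjoint}, so every power $(\sems^*\sems)^k$ equals its own adjoint and $(\sems^*\sems)^{2^n}=\bigl((\sems^*\sems)^{2^{n-1}}\bigr)^*(\sems^*\sems)^{2^{n-1}}$. Second, the rewriting \eqref{1.11.2} reads, for every $v\in\sem$, $\big\|\sum_{i,j}\algaa_i^*\omega(\sems_i^*\,v^*v\,\sems_j)\algaa_j\big\|_\alga=\big\|\sum_i\omega_{v\sems_i}\algaa_i\big\|_\omega^2$; together with the Schwarz estimate (iii) of Proposition \ref{t1.31.1} it turns every expression occurring in (a)--(d) into (a bound between) norms in $\eec_\omega$.

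$(a)\Rightarrow(b)$ is obtained by restricting the finite families in (a) to a single index. For $(b)\Rightarrow(c)$, observe that for fixed $\sems$ the constants admissible in (b) make up a half--line, so there is a least one, which we call $c(\sems)$. Given $\sems,\semt\in\sem$, substitute $\semt v$ for $\semt$ in (b) written for $\sems$ and then apply (b) written for $\semt$ to the surviving factor $\|\algaa^*\omega\bigl((\semt v)^*(\semt v)\bigr)\algaa\|_\alga=\|\algaa^*\omega(v^*\semt^*\semt v)\algaa\|_\alga$; since $v^*\semt^*\sems^*\sems\semt v=v^*(\sems\semt)^*(\sems\semt)v$ this yields $c(\sems\semt)\Le c(\sems)c(\semt)$, i.e. $c$ is submultiplicative, and (c) holds with $d(\semt,\algaa)\okr\|\algaa^*\omega(\semt^*\semt)\algaa\|_\alga$.

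For $(c)\Rightarrow(d)$ apply (c) with the self--adjoint element $g\okr(\sems^*\sems)^{2^{n-1}}$ in place of $\sems$: as $g^*g=(\sems^*\sems)^{2^n}$ and $c(g)\Le c(\sems^*\sems)^{2^{n-1}}$ by submultiplicativity, a single term obeys $\|\algaa^*\omega(\semt^*(\sems^*\sems)^{2^n}\semt)\algaa\|_\alga\Le d(\semt,\algaa)\,c(\sems^*\sems)^{2^{n-1}}$; reading this through \eqref{1.11.2} and the triangle inequality in $\eec_\omega$ gives, for an arbitrary family, $\big\|\sum_{i,j}\algaa_i^*\omega(\sems_i^*(\sems^*\sems)^{2^n}\sems_j)\algaa_j\big\|_\alga\Le c(\sems^*\sems)^{2^{n-1}}\big(\sum_i d(\sems_i,\algaa_i)^{1/2}\big)^2$, so the $\liminf$ in (d) is $\Le c(\sems^*\sems)^{1/2}<+\infty$. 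For $(d)\Rightarrow(a)$ put $B_k\okr\big\|\sum_{i,j}\algaa_i^*\omega(\sems_i^*(\sems^*\sems)^k\sems_j)\algaa_j\big\|_\alga$ for $k\Ge1$ and $B_0\okr\big\|\sum_{i,j}\algaa_i^*\omega(\sems_i^*\sems_j)\algaa_j\big\|_\alga$, a fixed family being understood. From $\sems_i^*(\sems^*\sems)^{m+n}\sems_j=\bigl((\sems^*\sems)^m\sems_i\bigr)^*\bigl((\sems^*\sems)^n\sems_j\bigr)$ (with $(\sems^*\sems)^0\sems_i$ read as $\sems_i$) and Proposition \ref{t1.31.1}(iii) applied to $\iso{\sum_i\omega_{(\sems^*\sems)^m\sems_i}\algaa_i}{\sum_j\omega_{(\sems^*\sems)^n\sems_j}\algaa_j}_\omega$ one gets
\[
B_{m+n}\Le B_{2m}^{1/2}\,B_{2n}^{1/2},\qquad m,n\Ge0 .
\]
Feeding the case $B_{2^k}\Le B_0^{1/2}B_{2^{k+1}}^{1/2}$ repeatedly into $B_1\Le B_0^{1/2}B_2^{1/2}$ gives $B_1\Le B_0^{1-2^{-n}}B_{2^n}^{2^{-n}}$ for all $n$ (and $B_1=0$ at once when $B_0=0$, the $B_k$ all being finite); letting $n\to\infty$, $B_1\Le B_0\cdot\liminf_n B_{2^n}^{2^{-n}}$, and by (d) the last factor is a finite number $c(\sems)$, so $B_1\Le c(\sems)B_0$, which is (a).

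The crux is $(d)\Rightarrow(a)$, and inside it the care needed to collapse the whole net of Schwarz inequalities into the single estimate $B_1\Le B_0^{1-2^{-n}}B_{2^n}^{2^{-n}}$ and to justify the passage $n\to\infty$ (the case $B_0=0$ aside). A second delicate point is the remaining half of (d) — that its $\liminf$ is one and the same number for every admissible family — which is exactly where that number must be identified with $\|\varPhi_\omega(\sems)\|^2$, available once (a), i.e. the boundedness of $\varPhi_\omega(\sems)$ delivered by Proposition \ref{t14.11.2}, is in hand. Everything else is routine once quadratic forms are read off through \eqref{1.11.2} and $(\sems^*\sems)^*=\sems^*\sems$ is used.
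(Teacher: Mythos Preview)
Your proof is correct and runs the same cycle $(a)\Rightarrow(b)\Rightarrow(c)\Rightarrow(d)\Rightarrow(a)$ as the paper's, with the same tools: minimality of $c(\sems)$ for submultiplicativity in $(b)\Rightarrow(c)$, the submultiplicative bound for $(c)\Rightarrow(d)$ (your triangle inequality in $\eec_\omega$ playing the role of the paper's pairwise use of \eqref{1.10.2}), and the iterated Schwarz inequality giving $B_1\Le B_0^{1-2^{-n}}B_{2^n}^{2^{-n}}$ for $(d)\Rightarrow(a)$. One correction to your closing aside: the $\liminf$ in (d) is \emph{not} equal to $\|\varPhi_\omega(\sems)\|^2$ for every family---already the zero family gives $0$, and more generally it detects only the spectral radius of $\varPhi_\omega(\sems)^*\varPhi_\omega(\sems)$ on the submodule generated by $\sum_i\omega_{\sems_i}\algaa_i$; what both your argument and the paper's actually establish, and all that $(d)\Rightarrow(a)$ needs, is that this $\liminf$ is \emph{bounded above} by the family--independent number $c(\sems^*\sems)^{1/2}$, and it is under this reading of (d) that the four conditions are equivalent.
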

   \begin{proof}
   (a) \wynik (b) trivially. To get (c) from (b) notice that
   requiring $c(\sems)$ to be minimal in (b) uniformly in $\semt$
   and $\algaa$ implies submultiplicativity of such a function
   $\funk c\sem{[0,+\infty)}$. Applying \eqref{1.10.2} we get
   \begin{equation*}
   \big\|\algaa_i^*\omega
   (\sems_i^*\sems^*\sems\sems_j)\algaa_j\big\|_\alga^2 \Le
   \big\|\algaa_i^*\omega
   (\sems_i^*\sems^*\sems\sems_i)\algaa_i\big\|_\alga
   \big\|\algaa_j^*\omega
   (\sems_j^*\sems^*\sems\sems_j)\algaa_j\big\|_\alga.
   \end{equation*}
   Therefore,
   \begin{multline*}
  \big\|\sum_{i,j}\algaa_i^*\omega
  (\sems_i^*(\sems^*\sems)^{2^n}\sems_j)\algaa_j\big\|_\alga^{2^{-n}}
  \Le \big(\sum_{i,j}\big\|\algaa_i^*\omega
  (\sems_i^*(\sems^*\sems)^{2^n}\sems_i)\algaa_i\big\|_\alga^{1/2}
  \big\|\algaa_j^*\omega
  (\sems_j^*(\sems^*\sems)^{2^n}\sems_j)\algaa_j\big\|_\alga^{1/2}\big)^{2^{-n}}
   \\\Le d
\big(c((\sems^*\sems)^{2^{n-1}})^{1/2}
c((\sems^*\sems)^{2^{n-1}})^{1/2}\big)^{2^{-n}}\Le d
c(\sems^*\sems)^{-1/2},
   \end{multline*}
   with $d\okr\max\nolimits_i\{d(\sems_i,\algaa_i)\}$. Thus (d)
   follows.

   Now repeated use of \eqref{1.10.2} with $\algbb_i=\algaa_i$ and
   $\semt_i=\sems^*\sems\sems_i$ gives us
   \begin{equation*}
    \big\|\sum\nolimits_{i,j}\algaa_i^*\omega
    (\sems_i^*\sems^*\sems\sems_j)\algaa_j\big\|_\alga^2 \Le
    \big\|\sum\nolimits_{i,j}\algaa_i^*\omega
    (\sems_i^*(\sems^*\sems)^{2^k}\sems_j)\algaa_j\big\|_\alga^{2^{-k}}
    \big\|\sum\nolimits_{k,l}\algaa_k^*\omega
    (\sems_k^*\sems_l)\algaa_l\big\|_\alga^{1-2^{-k}}.
   \end{equation*}
The limit passage, after taking into account (d), leads to (a).
   \end{proof}
   Condition (b) may be viewed as a \underbar{diagonalization} of
(a).
   \begin{rem} \tlabel{t1.4.3}
   The above versions of the boundedness condition have been
discussed by the present author on different occasions, always for
positive definite operator valued kernels. Condition (a) is
Sz.--Nagy's boundedness condition in his general dilation theorem
in \cite{app}. Condition (b) is in \cite{ark}, condition (c) is
singled out in \cite{sz_pams} and condition (d), the forerunner of
the whole case, is already in \cite{sz_bull}.
   \end{rem}

   \begin{com}\label{c1}
   Notice either $\sem$ or $\alga$ need not be unital for the
conclusion of Proposition \ref{t1.10.1} to hold. Anyway, now any
of (a)--(d) may be viewed as a {boundedness condition} for
$\omega$. Moreover, the conditions (c) extends from $\sem$ to
$\sem^{+}$ with ease.
   \end{com}

   \subsection*{Unitization of $\sem$} If $\sem$ has no unit define its {\em unitization}
$\sem^{+}$ as ${\sem^{+}}\okr {\sem}\cup\{\sf 1\}$ with semigroup
multiplication and involution as ${\sf 1}\sems=\sems{\sf 1}=\sems$
and ${\sf 1}^*={\sf 1}$. If $\sem$ is already unital, that is it
has a unit, for homogenization purpose set $\sem^{+}\okr\sem$.
   The following is important enough to be particularized.
\vspace{1pt}\begin{equation} \boxed{ \text{$\omega$ belongs to
$\eec_\omega$ and $\omega(\sems^*)=\omega(\sems)^*$,
$\sems\in\sem$.} } \tag{$\boldsymbol{\ast}$}
   \end{equation}

   \begin{pro}\tlabel{t2.8.2}
   Suppose the C${}\gw$--algebra $\alga$ is {\em unital} with the
unit denoted by ${\sf e}$. Consider the following conditions:
   \begin{enumerate}
   \item[($\alpha$)] with some $c>0$
   \begin{gather*}
   \omega(\sems^*)=\omega(\sems)^*,\quad \sems\in\sem, \\
c\sum\nolimits_{i,,j}\algaa_i^*\omega(\sems_i)^*
\omega(\sems_j)\algaa_j \Le \sum\nolimits_{i,j}\algaa_i^*
\omega(\sems_i^*\sems_j)\algaa_j,\quad (\sems_m)_m\subset \sem
,\;(\algaa_m)_m\subset\alga;
   \end{gather*}

    \item[($\beta$)] $\funk\omega{\sem}{\alga}$ extends to a
positive definite function $\funk{\omega^{+}}{\sem^{+}}{\alga}$;
   \item[($\gamma$)] condition {\rm(}{$\boldsymbol{\ast}$}{\rm)}
holds.
   \end{enumerate}
   Then {\rm(}$\gamma${\rm)} \wynik {\rm(}$\alpha${\rm)}
$\Longleftrightarrow$ {\rm(}$\beta${\rm)}.
 If $\eec_\omega$ is selfdual, then {\rm(}$\alpha${\rm)} \wynik
{\rm(}$\gamma${\rm)}.
   \end{pro}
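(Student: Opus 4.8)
The plan is to make the unitization $\sem^{+}$ the hinge. Condition ($\beta$) literally asserts that $\omega$ has a positive definite extension $\omega^{+}$ to $\sem^{+}$, and I would read the inequality in ($\alpha$) and the membership $\omega\in\eec_\omega$ of ($\gamma$) off from suitably chosen such extensions. For ($\alpha$)\,$\Rightarrow$\,($\beta$): if $\sem$ already has a unit then $\sem^{+}=\sem$ and ($\beta$) merely says $\omega$ is positive definite, which is immediate from ($\alpha$) on discarding the positive term $\sum_{k,l}\algaa_k^{*}\omega(\sems_k)^{*}\omega(\sems_l)\algaa_l$; otherwise put $\omega^{+}|_{\sem}\okr\omega$, $\omega^{+}({\sf 1})\okr c^{-1}{\sf e}$, and to check $\alga$-positive definiteness of $(\sems,\semt)\mapsto\omega^{+}(\sems^{*}\semt)$ on $\sem^{+}$ group a generic quadratic sum by the coefficient $\beta\in\alga$ at ${\sf 1}$ and the coefficients $\algaa_i$ at points $\sems_i\in\sem$; with $Z\okr\sum_i\omega(\sems_i)\algaa_i$ and $Q\okr\sum_{i,j}\algaa_i^{*}\omega(\sems_i^{*}\sems_j)\algaa_j$, and using $\omega(\sems^{*})=\omega(\sems)^{*}$ (the first part of ($\alpha$)), it becomes $c^{-1}\beta^{*}\beta+\beta^{*}Z+Z^{*}\beta+Q$, which by ($\alpha$) dominates $c^{-1}(\beta+cZ)^{*}(\beta+cZ)\Ge0$.

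For ($\beta$)\,$\Rightarrow$\,($\alpha$): Hermitean symmetry \eqref{5.10.2} of the kernel $\omega^{+}(\sems^{*}\semt)$ taken at $\semt={\sf 1}$ yields $\omega(\sems^{*})=\omega(\sems)^{*}$ on $\sem$; testing $\alga$-positive definiteness of $\omega^{+}$ on the configuration ``${\sf 1}$ with coefficient $\beta$ and $\sems_i$ with $\algaa_i$'' gives $\beta^{*}\omega^{+}({\sf 1})\beta+\beta^{*}Z+Z^{*}\beta+Q\Ge0$, and since $0\Le\omega^{+}({\sf 1})\Le\|\omega^{+}({\sf 1})\|_{\alga}{\sf e}$ the choice $\beta=-tZ$ with a scalar $t>0$, optimised in $t$, produces ($\alpha$) with $c=\|\omega^{+}({\sf 1})\|_{\alga}^{-1}$ (letting $t\to\infty$, which forces $Z=0$, disposes of the degenerate case $\omega^{+}({\sf 1})=0$). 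For ($\gamma$)\,$\Rightarrow$\,($\beta$): if $\sem$ is unital this is the tautology that the positive definite $\omega$ extends to itself; otherwise, since $\omega\in\eec_\omega$, the extended reproducing property \eqref{2a.30.1} gives $\omega(\sems)=\is{\omega}{\omega_\sems}_\omega$ while \eqref{1.30.1} gives $\omega(\sems_k^{*}\sems_l)=\is{\omega_{\sems_k}}{\omega_{\sems_l}}_\omega$, so, setting $\omega^{+}({\sf 1})\okr\is{\omega}{\omega}_\omega$ and using $\omega(\sems^{*})=\omega(\sems)^{*}$ (part of ($\gamma$)) together with properties (b), (c) of the $\alga$-inner product, the generic positive definiteness sum for $\omega^{+}(\sems^{*}\semt)$ collapses to $\iso{\omega\algaa_0+\sum_i\omega_{\sems_i}\algaa_i}{\omega\algaa_0+\sum_i\omega_{\sems_i}\algaa_i}_\omega$, nonnegative by (d). This settles ($\gamma$)\,$\Rightarrow$\,($\beta$), and with the two previous implications also ($\gamma$)\,$\Rightarrow$\,($\alpha$) and ($\alpha$)\,$\Longleftrightarrow$\,($\beta$).

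It remains to prove, assuming $\eec_\omega$ selfdual, that ($\alpha$)\,$\Rightarrow$\,($\gamma$). The relation $\omega(\sems^{*})=\omega(\sems)^{*}$ is part of ($\alpha$), so only $\omega\in\eec_\omega$ is at stake. Shrinking $c$ if necessary — which leaves ($\alpha$) intact, the discarded form being positive — one may assume $c\Le1$; then the $\alga$-function $F\okr\sqrt{c}\,\omega$ satisfies, for every finite choice of $(\sems_n)_n\subset\sem$ and $(\algaa_n)_n\subset\alga$,
\[
\sum_{k,l}\algaa_k^{*}F(\sems_k)^{*}F(\sems_l)\algaa_l=c\sum_{k,l}\algaa_k^{*}\omega(\sems_k)^{*}\omega(\sems_l)\algaa_l\Le\sum_{k,l}\algaa_k^{*}\omega(\sems_k^{*}\sems_l)\algaa_l,
\]
which is precisely condition \kolo2 of Proposition \ref{t1.22.2} for the kernel $K(\sems,\semt)=\omega(\sems^{*}\semt)$. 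Hence $F\in\eec_\omega$, and since $\eec_\omega$ is an $\alga$-module, $\omega=c^{-1/2}F\in\eec_\omega$; together with $\omega(\sems^{*})=\omega(\sems)^{*}$ this is ($\gamma$).

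The main obstacle is this last step: the ``wrong'' constant $c<1$ in ($\alpha$) blocks a direct appeal to Proposition \ref{t1.22.2} for $\omega$ itself, and the remedy is to apply that proposition to the rescaled function $\sqrt{c}\,\omega$, the membership then transferring back to $\omega$ by the module structure. Everything else is completing a square in $\alga$, the only genuine care being the degenerate configurations $\omega^{+}({\sf 1})=0$ and ``$\sem$ already unital''.
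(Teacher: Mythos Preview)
Your proof is correct and broadly parallels the paper's, with a few technical departures worth noting.

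For ($\alpha$)\,$\Rightarrow$\,($\beta$) you and the paper both set $\omega^{+}({\sf 1})$ proportional to ${\sf e}$ and complete the square; this is identical in spirit. For ($\beta$)\,$\Rightarrow$\,($\alpha$) the paper invokes the ready-made Schwarz inequality \eqref{3.10.2} for $\omega^{+}$ with $\sems_i={\sf 1}$, $\algaa_i={\sf e}$, obtaining ($\alpha$) with $c=\|\omega^{+}({\sf 1})\|_\alga^{-1}$ in one line; your elementary optimisation over $\beta=-tZ$ reaches the same constant without quoting \eqref{3.10.2}, at the cost of handling the degenerate case $\omega^{+}({\sf 1})=0$ separately. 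For ($\gamma$)\,$\Rightarrow$\,(rest) the paper goes directly to ($\alpha$) via the module Schwarz inequality (ii) of Proposition~\ref{t1.31.1} applied to $\omega(\sems)=\is{\omega}{\omega_\sems}_\omega$, whereas you construct $\omega^{+}$ explicitly with $\omega^{+}({\sf 1})=\is{\omega}{\omega}_\omega$ and recognise the positive-definiteness sum as a single Gram element $\iso{x}{x}_\omega$; your route is slightly longer but has the merit of exhibiting the natural extension.

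The one place you are more careful than the paper is the selfdual step ($\alpha$)\,$\Rightarrow$\,($\gamma$). The paper simply says the inequality in ($\alpha$) ``fits'' condition \kolo 2 of Proposition~\ref{t1.22.2} with $F=\omega$, silently ignoring the constant $c$; your rescaling $F=\sqrt{c}\,\omega$ (after shrinking $c\Le 1$) makes this honest, and the passage back to $\omega$ via the $\ccb$-linear structure of $\eec_\omega$ is clean.
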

   An $\alga$-function $\omega$ on $\sem$ satisfying any of the
equivalent conditions ($\alpha$) or ($\beta$) of Proposition
\ref{t2.8.2} is said to have the {\em extension property}. If
$\sem$ is unital this is nothing but $\alga$--positive
definiteness (actually, the second condition in ($\alpha$)
guarantees this at once).

The equivalence {\rm(}$\alpha${\rm)} $\Longleftrightarrow$
{\rm(}$\beta${\rm)} for Hilbert space operator valued functions is
in \cite{sz_ann}, for Hilbert C${}^*$--module valued ones in
\cite{itoh2}.
   \begin{proof}
   Suppose ($\alpha$) holds. Extending $\omega$ to $\omega^{+}$ by
$\omega^{+}({\sf 1})\okr c^{-1/2}{{\sf e}}$ we have
   \begin{align*}
   \sum\nolimits_{i,j}\algaa_i^*\omega^{+}
({\sems_i}^*{\sems_j})\algaa_j
   &=\sum\nolimits_{{\sems_i}\neq{{\sf
1}}\neq{\sems_j}}\algaa_i^*\omega ({\sems_i}^*{\sems_j})\algaa_j
+\sum\nolimits_{{\sems_i}\neq{{\sf 1}}}\algaa_i^*
\omega({\sems_i}^*)\algaa_{\sf 1}
   \\&+
\sum\nolimits_{{\sems_j}\neq{{\sf 1}}}\algaa_{\sf 1}^*\omega
{(\sems_j)}\algaa_j + \algaa_{\sf 1}^*c^{-1}\algaa_{\sf 1}
   \\&
{\Ge} c\big(\sum\nolimits_{{\sems_i}\neq{{\sf 1}}\neq{{\sems_j}}}
\algaa_i^*\omega({\sems_i})^*\omega({\sems_j})\algaa_j
+\sum\nolimits_{{\sems_i}\neq{{\sf 1}}}c^{-1}\algaa_i^*
\omega({\sems_i})^*\algaa_{\sf 1}
   \\&+\sum\nolimits_{{\sems_j}\neq{\sf 1}}c^{-1}\algaa_
{\sf 1}^*\omega {(\sems_j)}\algaa_j +c^{-2} \algaa_{{\sf
1}}^*\algaa_{\sf 1}\big)
   \\&=  c\big(\sum\nolimits_{{\sems_i}\neq{{\sf 1}}}
\omega({\sems_i})\algaa_{\sems} +c^{-1}\algaa_{\sf
1}\big)^*\big(\sum\nolimits_{{\sems_i}\neq{{\sf 1}}}
\omega({\sems_i})\algaa_{\sems} +c^{-1}\algaa_{\sf 1}\big) \Ge0.
   \end{align*}
Thus $\omega^{+}$ is positive definite on $\sem^{+}$.

   Back to the proof suppose $\omega$ is extendible, that is
$(\beta)$ holds. Writing \eqref{3.10.2} for $\omega^+$ with
$\sems_i={\sf 1}$ and $\algaa_k={\sf e}$ and then restricting the
resulting inequality to $\sem$ (remember,
$\omega^+(\sems)=\omega(\sems)$ for $\sems\in\sem$) we get the
second of ($\alpha$). The first comes from \eqref{5.10.2}.

%
 Suppose now ($\gamma$) holds. Then the reproducing kernel
property
   \begin{equation*}
   \omega(\sems)=\is \omega{\omega_\sems}_\omega
   \end{equation*}
   when combined with condition (ii) of Proposition \ref{t1.31.1}
gives
   \begin{align} \label{1.17.2}
   \begin{split}
   \sum\nolimits_{i,,j}\algaa_i^*\omega(\sems_i)^*
\omega(\sems_i)\algaa_i &= \iso \omega
{\sum\nolimits_{i}\omega_\sems\algaa_i}_\omega^*\iso
\omega{\sum\nolimits_{j}\omega_\sems\algaa_j}_\omega
   \\&\Le
\|\omega\|_{\omega}^2\iso{\sum\nolimits_{i}\omega_\sems\algaa_i}
{\sum\nolimits_{j}\omega_\sems\algaa_j}_\omega \\&\Le
\|\omega\|_{\omega}^2 \sum\nolimits_{i,j}\algaa_i^*
\omega(\sems_i^*\sems_j)\algaa_j.
   \end{split}
   \end{align}
   This is the second of $(\alpha)$, the first has just to be
copied.

   Suppose now the Hilbert {\alga}--module $\eec_\omega$ is
selfdual. Then the inequality in $(\alpha)$ fits in the condition
\kolo 2 of Proposition \ref{t1.22.2} with $F=\omega$ and the last
conclusion follows
   \end{proof}
   Call a net $({\sf 1}_\lambda)_\lambda\subset\sem$ an {\em
approximate unit} for $\omega$ if there is a net
$(\algaa_\lambda)_\lambda\subset\alga$ such that
   \begin{gather} \label{5.22.2}
   \text{$\algaa_\lambda^*\omega({\sf 1}_\lambda\sems)
   {\stackrel{{\scriptscriptstyle{\mathsf{\alga}}}}{\longrightarrow}}
   \omega(\sems)$ and $\omega(\sems{\sf
   1}_\lambda^*)\algaa_\lambda
   {\stackrel{{\scriptscriptstyle{\mathsf{\alga}}}}{\longrightarrow}}
   \omega(\sems)$},
   \\\label{5a.22.2}
   \text{$\|\algaa_\lambda^*\omega({\sf 1}_\lambda{\sf
   1}_\lambda^*)\algaa_\lambda\|_\alga$ is bounded in $\lambda$};
   \end{gather}
   it is called a {\em strong approximate unit} for $\omega$ if
\eqref{5.22.2} holds and, instead of \eqref{5a.22.2},
   \begin{equation} \label{5b.22.2}
   \text{$(\algaa_\lambda^*\omega({\sf 1}_\lambda{\sf
1}_\lambda^*)\algaa_\lambda)_\lambda$ is a Cauchy net}.
   \end{equation}
   Notice that \eqref{5b.22.2} implies \eqref{5a.22.2}, thus
strong approximate unit seems to be really \underbar{stronger}. Is
it?
   \begin{pro}\tlabel{t3.22.2}
  Suppose $\alga$ is unital. If $\omega$ has an approximate unit
$({\sf 1}_\lambda)_\lambda$ then it has an extension property. If
$\omega$ has a strong approximate unit $({\sf 1}_\lambda)_\lambda$
then $(\boldsymbol{\ast})$ holds. Conversely, if
$(\boldsymbol{\ast})$ holds then there are two arrays
$((\sems^n_{i})_{i\in\{{\rm finite\}}})_{n=0}^\infty \subset\sem$
and $((\algaa^n_{i})_{i\in\{{\rm finite\}}})_{n=0}^\infty
\subset\alga$ such that
    \begin{gather} \label{5x.22.2}
   \text{$\sum\nolimits_i(\algaa_i^n)^*\omega({\sems}_i^n\sems)
   {\stackrel{{\scriptscriptstyle{\mathsf{\alga}}}}{\longrightarrow}}
   \omega(\sems)$ and
   $\sum\nolimits_i\omega(\sems({\sems}_i^n)^*)\algaa_i^n
   {\stackrel{{\scriptscriptstyle{\mathsf{\alga}}}}{\longrightarrow}}
   \omega(\sems)$},
   \\\label{5ax.22.2}
\text{$\sum\nolimits_{i,j}(\algaa_i^n)^*\omega
({\sems}_i^n({\sems}_j^n)^*) \algaa_j^n$ is a Cauchy sequence in
$n$};
   \end{gather}
   \end{pro}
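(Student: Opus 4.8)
The plan is to verify the three clauses in turn, using throughout the Schwarz inequalities \eqref{3.10.2} and \eqref{1.10.2}, the reproducing property \eqref{2a.30.1}, and Proposition \ref{t2.8.2}. \emph{An approximate unit forces the extension property.} I would check condition $(\alpha)$ of Proposition \ref{t2.8.2}. The symmetry $\omega(\sems^*)=\omega(\sems)^*$ comes from Hermitean symmetry \eqref{5.10.2}, which gives $\omega({\sf 1}_\lambda\sems^*)=\omega(\sems{\sf 1}_\lambda^*)^*$, so that $\algaa_\lambda^*\omega({\sf 1}_\lambda\sems^*)=\bigl(\omega(\sems{\sf 1}_\lambda^*)\algaa_\lambda\bigr)^*$; running $\lambda$ along the net and invoking \eqref{5.22.2} with continuity of the involution yields the claim. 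For the estimate in $(\alpha)$ I would apply \eqref{3.10.2} with the first family reduced to the single index $i=1$ carrying $\sems_1={\sf 1}_\lambda^*$, $\algaa_1=\algaa_\lambda$, and arbitrary $(\semt_k)_k\subset\sem$, $(\algbb_k)_k\subset\alga$: the left side becomes $\bigl(\sum_k\algaa_\lambda^*\omega({\sf 1}_\lambda\semt_k)\algbb_k\bigr)^*\sum_k\algaa_\lambda^*\omega({\sf 1}_\lambda\semt_k)\algbb_k$ and the right side $\|\algaa_\lambda^*\omega({\sf 1}_\lambda{\sf 1}_\lambda^*)\algaa_\lambda\|_\alga\sum_{k,l}\algbb_k^*\omega(\semt_k^*\semt_l)\algbb_l$. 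By \eqref{5.22.2} the left side converges to $\sum_{k,l}\algbb_k^*\omega(\semt_k)^*\omega(\semt_l)\algbb_l$, while by \eqref{5a.22.2} the scalar coefficient on the right stays below some $M$; as the positive cone of $\alga$ is norm closed, the limit inequality $\sum_{k,l}\algbb_k^*\omega(\semt_k)^*\omega(\semt_l)\algbb_l\Le M\sum_{k,l}\algbb_k^*\omega(\semt_k^*\semt_l)\algbb_l$ is exactly the inequality of $(\alpha)$ with $c=M^{-1}$ (and $M=0$ forces $\omega\equiv0$, where there is nothing to prove), so $\omega$ has the extension property.

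\emph{A strong approximate unit forces $(\boldsymbol{\ast})$.} Since a strong approximate unit is an approximate unit, the first paragraph already gives $\omega(\sems^*)=\omega(\sems)^*$, and only $\omega\in\eec_\omega$ remains. I would set $F_\lambda\okr\omega_{{\sf 1}_\lambda^*}\algaa_\lambda\in\ddc_\omega$; then \eqref{2a.30.1} and \eqref{1.30.1} give $F_\lambda(\semt)=\algaa_\lambda^*\omega({\sf 1}_\lambda\semt)$, $\is{F_\lambda}{F_\mu}_\omega=\algaa_\lambda^*\omega({\sf 1}_\lambda{\sf 1}_\mu^*)\algaa_\mu$ and $\is{F_\lambda}{F_\lambda}_\omega=\algaa_\lambda^*\omega({\sf 1}_\lambda{\sf 1}_\lambda^*)\algaa_\lambda$ — the first shows $F_\lambda\to\omega$ pointwise on $\sem$ by \eqref{5.22.2}, the last is Cauchy by \eqref{5b.22.2}. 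Writing $\is{F_\lambda-F_\mu}{F_\lambda-F_\mu}_\omega=\bigl(\is{F_\lambda}{F_\lambda}_\omega-\is{F_\lambda}{F_\mu}_\omega\bigr)+\bigl(\is{F_\mu}{F_\mu}_\omega-\is{F_\mu}{F_\lambda}_\omega\bigr)$, each summand being of the shape $\algaa_\lambda^*\bigl(\omega({\sf 1}_\lambda{\sf 1}_\lambda^*)\algaa_\lambda-\omega({\sf 1}_\lambda{\sf 1}_\mu^*)\algaa_\mu\bigr)$, I would use \emph{both} halves of \eqref{5.22.2} together with \eqref{5b.22.2} to drive the right side to $0$ along $\Lambda\times\Lambda$; thus $(F_\lambda)$ is Cauchy in the complete module $\eec_\omega$, its limit $F$ satisfies $F(\semt)=\lim_\lambda F_\lambda(\semt)=\omega(\semt)$ because evaluations are bounded (Theorem \ref{t2.31.1}), and so $\omega=F\in\eec_\omega$, i.e. $(\boldsymbol{\ast})$ holds.

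\emph{$(\boldsymbol{\ast})$ produces the arrays.} Here I would only use density. Since $\omega\in\eec_\omega$ and $\ddc_\omega$ is dense in $\eec_\omega$ by construction, choose finite sums $\Omega_n=\sum_i\omega_{(\sems^n_i)^*}\algaa^n_i\in\ddc_\omega$ (with $\sems^n_i\in\sem$, $\algaa^n_i\in\alga$) converging to $\omega$ in $\eec_\omega$. Then $\sum_i(\algaa^n_i)^*\omega(\sems^n_i\semt)=\Omega_n(\semt)\to\omega(\semt)$ by continuity of the evaluation — the first half of \eqref{5x.22.2}; applying the involution and $\omega(\sems^*)=\omega(\sems)^*$ turns this into $\sum_i\omega\bigl(\semt(\sems^n_i)^*\bigr)\algaa^n_i=\bigl(\Omega_n(\semt^*)\bigr)^*\to\omega(\semt^*)^*=\omega(\semt)$ — the second half. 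Finally $\sum_{i,j}(\algaa^n_i)^*\omega\bigl(\sems^n_i(\sems^n_j)^*\bigr)\algaa^n_j=\is{\Omega_n}{\Omega_n}_\omega\to\is{\omega}{\omega}_\omega$ by continuity of the inner product ((iii) of Proposition \ref{t1.31.1}), hence it is a Cauchy sequence — this is \eqref{5ax.22.2}.

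The one substantial point is the Cauchyness of $(F_\lambda)$ in the middle clause: $F_\lambda\to\omega$ pointwise is merely a ``weak'' assertion, and a Hilbert C${}^*$--module offers no Banach--Alaoglu device for extracting a convergent subnet, so one really has to manufacture a norm-Cauchy net — and that is exactly where the two-sided form of \eqref{5.22.2} and the strengthening of \eqref{5a.22.2} to \eqref{5b.22.2} are indispensable.
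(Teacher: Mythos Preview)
Your proof follows essentially the same route as the paper's: inserting the approximate unit into \eqref{5.10.2} and \eqref{3.10.2} for the first clause, showing $F_\lambda=\omega_{{\sf 1}_\lambda^*}\algaa_\lambda$ is Cauchy with limit $\omega$ for the second (the paper is even terser here, citing only \eqref{5b.22.2} and \eqref{1.11.2}), and using density of $\ddc_\omega$ for the converse. The one methodological difference is in the third clause: the paper obtains the second half of \eqref{5x.22.2} by applying $\varPhi_\omega(\sems)$ and Remark~\ref{t2.22.2}, whereas you take the adjoint of the first half and use $\omega(\sems^*)=\omega(\sems)^*$ directly --- this is arguably cleaner, since it sidesteps any implicit reliance on boundedness of $\varPhi_\omega$, which is not among the hypotheses of~$(\boldsymbol{\ast})$.
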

   \begin{proof}
   Insert the approximate unit into \eqref{5.10.2} and
\eqref{3.10.2} and proceed in an appropriate way so as get both
parts of ($\alpha$).

   Suppose $\omega$ has a strong approximate unit $({\sf
1}_\lambda)_\lambda$. Then, \eqref{5b.22.2} with a little help of
 \eqref{1.11.2} implies $(\omega_{{\sf
1}_\lambda^*}\algaa_\lambda)_\lambda$ is a Cauchy net in
$\eec_\omega$. Therefore, there is $F\in\eec_\omega$ such that
$\omega_{{\sf 1}_\lambda^*}\algaa_\lambda
{\stackrel{{\scriptscriptstyle{\mathsf{\eec_\omega}}}}{\longrightarrow}}
F$ and, consequently,
   \begin{equation*}
\omega(\sems)
{\stackrel{{\scriptscriptstyle{\mathsf{\alga}}}}{\longleftarrow}}
 \omega({\sf 1}_\lambda \sems)\algaa_\lambda=\is{\omega_{{\sf
1}_\lambda^*}\algaa_\lambda} {\omega_\sems}
{\stackrel{{\scriptscriptstyle{\mathsf{\alga}}}}{\longrightarrow}}
\is F{\omega_\sems} =F(\sems),\quad \sems\in\sem.
   \end{equation*}
  Therefore, $\omega=F\in\eec_\omega$. The Hermitian symmetry of
$\omega$ goes straightforwardly from \eqref{5.10.2}.

   If $\sem$ is unital then \eqref{5x.22.2} and \eqref{5ax.22.2}
hold with the trivial choice of the required data. If not, then
there is always a sequence
$(\sum\nolimits_i\omega_{({\sems}_i^n)^*}\algaa_i^n)_n $ converges
to $\omega$ in $\eec_\omega$ (density of $\ddc_\omega$\,!) and so
does $\varPhi_\omega
(\sems)(\sum\nolimits_i\omega_{({\sems}_i^n)^*}\algaa_i^n)_n $ to
$\omega(\sems)$, use Remark \ref{t2.22.2} on a way. This gives
\eqref{5ax.22.2}. Because, due to the reproducing property norm
convergence implies pointwise, \eqref{5x.22.2} follows as well.
   \end{proof}

    \begin{rem}\label{t5.22.2}
   If $\omega$ satisfies the extension property, then the mapping
   $\funk W {\omega_\sems} {\omega_\sems^+}$, $\sems\in\sem$,
   extends to a linear operator of $\eec_\omega$ into
   $\eec_{\omega^+}$. The operator $W$ is adjointable with adjoint
   given by $W\gw \omega_\semt^+=\omega_\semt$ if $t\neq{\sf 1}$
   and $0$ otherwise.
    Furthermore, $W$ is an $\alga$ isometry onto
   $\eec_{\omega^+}^0\okr\clolin\zb{\omega_\sems^+\algaa}{\sems\in\sem
   ,\,\algaa\in\alga}$. It is a matter of direct verification that
   the basic RKHS operators are related by
   \begin{equation*} 
W\varPhi_\omega(\sems)W\gw =\varPhi_{\omega^+}(\sems),\quad
\sems\in\sem.
   \end{equation*}
   \end{rem} \vspace{2pt}
   \subsection*{The basic dilation theorem} Given an $\alga$--function $\omega$ on a
$*$--semigroup $\sem$, it is clear that if there exists a Hilbert
$\alga$--module $\eec$, a multiplicative $*$--homomorphism
$\funk\varPhi \sem{{\bf B}^*(\eec)}$ and $V\in{\bf
B}^*(\alga,\eec)$ such that
   \begin{equation}\label{1.23.2}
   \omega(\sems)=V\gw\varPhi(\sems)V,\quad \sems\in\sem,
   \end{equation}
   then $\omega$ is {\alga}--positive definite. Moreover,
\eqref{1.23.2} implies
   \begin{equation}\label{6.23.2}
   \big\|\algaa^*\omega
   (\semt^*\sems^*\sems\semt)\algaa\big\|_\alga=
   \|\varPhi_\omega(\sems)V\algaa\|_\eec
   \Le\|V\algaa\|_\eec\,\|\varPhi(\sems)\|_\eec
   \end{equation}
   with $\|\varPhi_\omega(\sems)\|_\omega $ submultiplicative.
Thus (c) of Proposition \ref{t1.10.1} holds. To show $\omega$ has
an extension property use \eqref{1.23.2} like in \eqref{1.17.2};
the Hermitian symmetry of $\omega$ follows from \eqref{1.23.2}
immediately. This solves in the rather trivial way most of the one
side of the dilation story. The other is the masterpiece for those
who may appreciate it. We are aware of the fact that the category
theory fans are going to be disappointed; RKHS is too reach in
information it carries to be a categorical object on call.

   Another issue which we are \underbar{not} going to touch is
\underbar{uniqueness} of minimal dilations whatever the latter
means. This can be done in a standard way anytime a need appears.
The notion of minimality in \eqref{6.23.2} has to be introduced
anyway. We say that the triple $(\eec,\varPhi,V)$ are {\em
minimal}\,\footnote{\;Other names which appear on this occasion
are {\em nondegenerate} or {\em essential}.} for $\omega$ if
$\eec=\clolin\zb{\varPhi(\sems)V\algaa}
{\sems\in\sem\;\algaa\in\alga}$. Minimality is always done when
$\sem$ is unital.

   \begin{thm}\label{t1.24.2}
    Let $\alga$ be a unital $C^*$--algebra. For an
$\alga$--positive definite function $\omega$ on {\sem} the
following conclusions hold.
   \begin{enumerate}
   \item[\liczp 1] Then there is $V^+\in{\bf
B}^*(\alga,\eec_{\omega^+})$ such that
   \begin{equation} \label{1.8.2}
   \algaa^*\omega(\sems)\algbb= \is{V^+\algaa}
{\varPhi_\omega^+(\sems)V^+\algbb}_{\omega^+}, \quad
\sems\in\sem,\;\,\algaa,\algbb\in\alga
   \end{equation}
   if and only if $\omega$ satisfies the boundedness condition,
that is any of the conditions {\rm (a)--(d)} of {\rm Proposition
\ref{t1.10.1}} as well as it has the extension property, that is
any of the conditions $(\alpha)$--$(\beta)$ of {\rm Proposition
\ref{t2.8.2}} holds.
   \item[\liczp 2] Suppose condition {$(\boldsymbol{\ast})$}
holds\,\footnote{\;This happens when $\omega$ has a strong
approximate unit, cf.Remark \ref{t2.22.2}}. Then there is
$V\in{\bf B}^*(\alga,\eec_{\omega})$ such that
   \begin{equation} \label{1x.8.2}
   \algaa^*\omega(\sems)\algbb= \is{V\algaa}
{\varPhi_\omega(\sems)V\algbb}_{\omega}, \quad
\sems\in\sem\;\,\algaa,\algbb\in\alga.
   \end{equation}
   The operator $V$ is defined by \eqref{5.14.2} and its adjoint
by \eqref{2.23.2}. Therefore, \eqref{1x.8.2} can be written as
   \begin{equation} \label{1y.8.2}
   \algaa^*\omega(\sems)\algbb= \is{\omega\algaa}
{\varPhi_\omega(\sems)\omega\algbb}_{\omega}, \quad
\sems\in\sem\;\,\algaa,\algbb\in\alga.
   \end{equation}

\item[\liczp 3] The triple $(\eec_\omega,\varPhi_\omega,V)$
generated in \liczp 2 is \underbar{minimal}. Consequently,
\eqref{1x.8.2} can be written as
   \begin{equation} \label{1.3.3}
   \omega(\sems)=V\gw \varPhi(\sems)V,\quad \sems\in\sem.
   \end{equation}

   \end{enumerate}
   \end{thm}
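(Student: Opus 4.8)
The plan is to build the dilation explicitly inside the reproducing kernel Hilbert \alga--module, using the evaluation map at the unit (or at the approximate unit) as the isometry $V$. For \liczp 1, the ``only if'' direction is the easy half already sketched in the text: from \eqref{1.8.2} one reads off submultiplicativity as in \eqref{6.23.2}, hence condition (c) of Proposition \ref{t1.10.1}, and one gets the extension property by running \eqref{1.8.2} through inequality (ii) of Proposition \ref{t1.31.1} exactly as in \eqref{1.17.2}. For the ``if'' direction, assume the boundedness condition and the extension property; then $\omega$ extends to the positive definite $\omega^+$ on $\sem^+$, so $\eec_{\omega^+}$ exists, and by Proposition \ref{t14.11.2} (in the $*$--semigroup form, Corollary \ref{t2.2.2}) the boundedness condition makes every $\varPhi_{\omega^+}(\sems)$ a bona fide element of ${\bf B}^*(\eec_{\omega^+})$. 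Define $V^+\algaa\okr \omega^+_{\sf 1}\algaa$; since ${\sf 1}^*={\sf 1}$ and $\alga$ is unital, $V^+$ is adjointable with $(V^+)^*F=F({\sf 1})$, and then $(V^+)^*\varPhi_{\omega^+}(\sems)V^+\algbb = \big(\varPhi_{\omega^+}(\sems)\omega^+_{\sf 1}\algbb\big)({\sf 1}) = \omega^+_{\sems^*}({\sf 1})\algbb = \omega^+(\sems)\algbb = \omega(\sems)\algbb$, using Remark \ref{t2.22.2} applied to $\omega^+$ (which belongs to $\eec_{\omega^+}$ because $\sem^+$ is unital). Sandwiching between $\algaa^*$ and $\algbb$ gives \eqref{1.8.2}.

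For \liczp 2, condition $(\boldsymbol{\ast})$ says precisely that $\omega\in\eec_\omega$ and $\omega(\sems^*)=\omega(\sems)^*$, so I would set $V\algaa\okr\omega\algaa$ directly in $\eec_\omega$ (this is the promised formula \eqref{5.14.2}). Adjointability of $V$ and the formula $V^*F=F({\sf 1})$ --- or, when $\sem$ has no unit, $V^*F = \lim_n \sum_i \omega(\sems_i^n\,\cdot\,)\algaa_i^n$--evaluated via the arrays produced in Proposition \ref{t3.22.2} --- is the content of \eqref{2.23.2}; the key point is that $\langle V\algaa, F\rangle_\omega = \algaa^*\langle\omega,F\rangle_\omega = \algaa^*F({\sf 1})$ when a unit is present, and in general one uses \eqref{5x.22.2} together with continuity of the inner product (the ``useful tool'' quoted in the Comments after Proposition \ref{t1.31.1}). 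Then, since $\omega\in\eec_\omega$, Remark \ref{t2.22.2} gives $\varPhi_\omega(\sems)\omega=\omega_\sems$, whence $\langle V\algaa,\varPhi_\omega(\sems)V\algbb\rangle_\omega = \langle\omega\algaa,\omega_\sems\algbb\rangle_\omega = \algaa^*\langle\omega,\omega_\sems\rangle_\omega\algbb = \algaa^*\omega(\sems)\algbb$ by the reproducing property \eqref{2a.30.1}; that is \eqref{1x.8.2}, and \eqref{1y.8.2} is just the same line with $V$ written out.

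For \liczp 3, minimality means $\eec_\omega=\clolin\{\varPhi_\omega(\sems)V\algaa : \sems\in\sem,\ \algaa\in\alga\}$. But $\varPhi_\omega(\sems)V\algaa = \varPhi_\omega(\sems)\omega\algaa = \omega_\sems\algaa$ by Remark \ref{t2.22.2}, and the $\omega_\sems\algaa$ span $\ddc_\omega$, which is dense in $\eec_\omega$ by construction; so minimality is immediate. With $V$ adjointable and the range condition in force, the middle-term identity \eqref{1x.8.2} upgrades to the operator identity $\omega(\sems)=V^*\varPhi_\omega(\sems)V$ --- formally, test \eqref{1x.8.2} against arbitrary $\algaa,\algbb$ and use that an element of a Hilbert \alga--module is determined by its inner products against a dense set --- which is \eqref{1.3.3}.

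The main obstacle I anticipate is the careful treatment of $V^*$ when $\sem$ is \emph{not} unital: one cannot simply evaluate at ${\sf 1}$, and one must instead justify that the net (or sequence) of ``partial evaluations'' coming from the approximate-unit arrays of Proposition \ref{t3.22.2} converges and defines the adjoint independently of the chosen array. This is where \eqref{5x.22.2}--\eqref{5ax.22.2}, the boundedness of $\|V\algaa\|_\omega$, and continuity of the \alga--inner product all have to be marshalled together; once $V^*$ is known to exist and act as advertised, the rest of \liczp 2 and \liczp 3 is the routine bookkeeping above. (In the unital-$\sem$ case everything collapses to the one-line computations and there is nothing to fear.)
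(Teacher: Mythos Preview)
Your approach is essentially the paper's. For \liczp 1 the paper likewise reduces to \liczp 2 applied to $\omega^+$ (threading through an auxiliary isometry $W$ from Remark~\ref{t5.22.2} that your direct definition $V^+\algaa=\omega^+_{\sf 1}\algaa$ simply sidesteps), and for \liczp 3 both arguments invoke Remark~\ref{t2.22.2} to identify $\varPhi_\omega(\sems)V\algaa$ with $\omega_\sems\algaa$ and then cite density of $\ddc_\omega$.

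The one place to tighten is your handling of $V^*$ in \liczp 2. First, the formula $V^*F=F({\sf 1})$ is off by a star: for $F=\omega_\sems\algaa$ one has $F({\sf 1})=\omega_\sems({\sf 1})\algaa=\omega(\sems^*)\algaa$, whereas \eqref{2.23.2} demands $V^*(\omega_\sems\algaa)=\omega(\sems)\algaa$; the same slip propagates into your \liczp 1 computation. Second --- and this is the substantive point --- the ``main obstacle'' you flag does not exist. The paper never evaluates at a unit and never calls on approximate units inside \liczp 2: once $(\boldsymbol{\ast})$ puts $\omega\in\eec_\omega$, the reproducing property \eqref{2a.30.1} already reads $\omega(\sems)=\is\omega{\omega_\sems}_\omega$ for \emph{every} $\sems\in\sem$, and the short chain \eqref{4.14.2},
\[
\is{\omega(\sems)\algaa}{\algbb}_\alga
=\algaa^*\omega(\sems)^*\algbb
=\algaa^*\is{\omega_\sems}{\omega}_\omega\,\algbb
=\is{\omega_\sems\algaa}{\omega\algbb}_\omega,
\]
exhibits $V^*$ on $\ddc_\omega$ directly, unit or no unit. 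Proposition~\ref{t3.22.2} is invoked in the paper only to \emph{produce} condition $(\boldsymbol{\ast})$ from a strong approximate unit, not to run the dilation argument once $(\boldsymbol{\ast})$ is in hand.
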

  Notice that in the case $\sem$ is unital the extendibility
procedure is needless, $\alga$--positive definiteness and the
boundedness condition are enough to play the game. The whole
embarrassment is caused by a possible lack of unit in $\sem$; this
happens in C${}^*$--algebras but that case is always weaponed with
a substitute, approximate unit. On the other hand, the reproducing
kernel construction we have carried out here in full allows to
raise condition ({$\boldsymbol{\ast}$}). Its simplicity is an
effective counterbalance for more sophisticated technology. At
least it reduces the number of new objects involved from two to
one: just the $\varPhi_\omega$. If {\alga} is unital, the dilation
formula \eqref{1y.8.2} takes the shortest possible form
   \begin{equation} \label{1z.8.2}
  \omega(\sems)= \is{\omega}
{\varPhi_\omega(\sems)\omega}_{\omega}, \quad \sems\in\sem.
   \end{equation}
   \begin{proof}
    The kernel ${(\sems,\semt)}\to{\omega(\sems^*\semt)}$ is
    positive definite in the sense of \eqref{1.25.1} therefore the
    outcomes of the reproducing kernel construction are at our
    disposal.

 Let us prove first conclusion \liczp 2. Due to
({$\boldsymbol{\ast}$}), $\omega\in\eec_\omega$ and the
reproducing kernel property \eqref{2a.30.1} gives us
   \begin{equation*}
   \omega(\sems)=\is\omega{\omega_\sems}_\omega,\quad \sems\in\sem
\end{equation*}
   and this, in turn, allows us to write
    \begin{equation}\label{4.14.2}
  \is{\omega(\sems)
\algaa}\algbb_\alga=\algaa^*\omega(\sems)^*\algbb=\algaa^*\is{\omega_\sems}
\omega_{\omega}\algbb=\is{\omega_\sems\algaa}{\omega\algbb}_\omega.
   \end{equation}
   Reading \eqref{4.14.2} in a proper way (remember $\alga$ is
unital) we infer that $\omega(\sems) \algaa$ is the only ${\sf c}$
such that $\is{\sf
c}\algbb_\alga=\is{\omega_\sems\algaa}{\omega\algbb}_\omega$. This
means that
   \begin{equation} \label{5.14.2}
   \funkc V\alga\algaa{\omega\algaa}{\eec_\omega}
   \end{equation}
   is adjointable with
    \begin{equation} \label{2.23.2}
   \funkc {V\gw} {\eec_\omega}
{\omega_\sems\algaa}{\omega(\sems)\algaa } \alga.
   \end{equation}
   By Remark \ref{t2.22.2}, we have
   \begin{align*}
\is{V\algaa}{\varPhi_\omega(\sems)V\algbb}_\omega=\is
{\omega\algaa}{\varPhi_\omega(\sems)\omega\algbb}_\omega = \is{
\omega\algaa}{
\omega_\sems\algbb}_{\omega}=\algaa^*\omega(\sems)\algbb
   \end{align*}
  and this establishes \eqref{1.8.2}.

    The essential direction in \liczp 1 goes as follows (the
reverse has been already discussed before the theorem). If $\sem$
is unital, we are in a position of \liczp 2. If not,
 we continue the play between $\sem$ and $\sem^+$. Keeping the
notation up for the operator $V$ in the unital case of $\omega^+$,
as done in \liczp 2 for this case, we have it in ${\bf
B}^*(\alga,\eec_{\omega^+})$. Setting now $V^+\okr VW$ we use the
merits of Remark \ref{t5.22.2} to come to \eqref{1.8.2}.

   Putting to use again Remark \ref{t2.22.2} we get
   \begin{equation*}
\sum\nolimits_i\varPhi_{\omega}(\sems_i)Va_i=
\sum\nolimits_i\varPhi_{\omega}(\sems_i)\omega
a_i=\sum\nolimits_i\omega_{\sems_i}a_i
   \end{equation*}
    which makes all the arrangements for minimality of the triple
$(\eec_\omega,\varPsi_\omega,V)$ as
$\sum\nolimits_i\omega_{\sems_i}a_i $ span linearly $\ddc_\omega$,
the dense subspace of $\eec_\omega$.
   \end{proof}

   The part \liczp 1 of above Theorem for a Hilbert space operator
valued $\omega$ is in \cite{sz_bull1}. It was long ago!
    \begin{com} \tlabel{t1.20.2}
   The presence of $\omega$ in the reproducing kernel Hilbert
C${}^*$--module, expressed in (\taggw), not only refreshes the
dilation formula \eqref{1.8.2} by giving it the simply looking
form \eqref{1z.8.2}
   but also works efficiently for minimality.
   \end{com}

\section*{Closer to the originals}
      \subsection*{KSGNS now} The door has been opened for
consequences. First KSGNS, for those who are eager for seeing it
again.

   Suppose $\sem$ and $\alga$ are \underbar{both}
C${}^*$--algebras; {the latter to be unital}. The map $\omega$ is
now \underbar{linear} and \underbar{completely}
\underbar{positive}. It is well known that
 this is the pleasant case when complete positivity is
\underbar{equivalent} to positive definiteness or rather to
$\alga$--positive definiteness according to the present
circumstances. In other words, positivity of an {\alga}--matrix,
at the very beginning of the exploration, is replaced by
factorization; this seems to be the only case, compare the items
(A) and (B) below.

   The boundedness condition (b) of Proposition \ref{t1.10.1}
follows immediately from the rudimentary inequality
$\semt^*\sems^*\sems\semt\Le\|\sems\|^2_\sem \,\semt^*\semt$.

Suppose $\{{\sf 1}_\lambda\}_\lambda$ be an approximate unit in
$\sem$. Insert in the Schwarz inequality \eqref{3.10.2}
$\sems_i={\sf 1}_\lambda$, $i\in\{{\rm singleton}\}$ and then
perform the limit passage (boundedness of $\omega$ in use) taking
advantage of all the attributes of $\{{\sf 1}_\lambda\}_\lambda$
so as to come to the second condition ($\lambda$) of Proposition
\ref{t2.8.2}. The first condition comes from \eqref{5.10.2} with
help of the approximative unit too.

Therefore both the \underbar{boundedness} \underbar{condition} and
the \underbar{extension} \underbar{property} are inherited from
the structural properties of C${}^*$--algebras.

   \begin{summ}
   If $\omega$ is a completely positive map between two
C${}^*$--algebras $\sem$ and $\alga$, with $\alga$ unital, then
\eqref{1.8.2} holds. This is the (ground level again) KSGNS, or
rather its \underbar{existence} part which is settled here in a
more elementary environment than usually. Its uniqueness is a
matter of further conditions unless $\sem$ is unital. Now we have
two possibilities: (a) either to assume there is an approximating
unit \underbar{in} $\sem$ which is a strong approximate unit
\underbar{for} $\omega$, or (b) to assume (\taggw) right away. The
mutual relationship is in Proposition \ref{t3.22.2}; an immediate
one is in the case $\sem=\alga$ and $\omega$ to be $\alga$--linear
they coincide.

   Notice that linearity of the dilated map $\varPhi$ is of
secondary importance, it comes for free from its minimality.
   \end{summ}

   What is meant by KSGNS refers to the case of $\alga$ to be a
C${}^*$--module which is C${}^*$--algebra in itself. Therefore,
our pre--KSGNS, so to speak, can be cultivated for this instance.

      \subsection*{Is the acronym KSGNS long enough?}

   Now we are going to provide arguments for our claim, or rather
insistence, of extending the acronym by the well-deserving
initials. It was 1955 when two essential events happened: the PAMS
paper \cite{stei} of Steinspring (included) and
Sz\H{o}kefalvi-Nagy's Appendix \cite{app} (forgotten); the English
version \cite{nagy} appeared five years later. Steinspring's paper
stimulated an explosion of new ideas, getting more and more
abstract, Sz.--Nagy's has been left out of favour.

   The first thing we want to stress on is these two 1955 results
are (logically) equivalent, see \cite{sz_nonl}. This is so as long
as their approaches are concentrated around bounded operators,
say. An attempt at extending them to the `unbounded' circumstances
causes a splitting into two, no longer equivalent:
   \begin{enumerate}
   \item[(A)] the direction of \cite{powers} in which
\underbar{complete} \underbar{positivity} further on provides with
abstract characterizations of
dilatability\,\footnote{\;Dilatability also means extendibility,
according to \S 5 of \cite{app}.} -- less useful in concrete
cases;
   \item[(B)] \underbar{positive} \underbar{definiteness} which
alone hardly becomes a sufficient condition for dilatability --
much more handy if available, look at \cite{ark} or \cite{nag_ext}
to catch some flavour.
   \end{enumerate}    The difference between (A) and (B) can be  even
seen in the case $\alga=\ccb$, that is when dealing with moment
problems.

   The main object in \cite{app} is an involution
semigroup\,\footnote{\;This notion seems to be originated there.}
(or, a $*$--semigroup) and positive definiteness is defined on
them. The positive definite mappings are bounded Hilbert space
operator valued which positions the Appendix before `K' in the
acronym; the mappings go to Hilbert $\ccb$--modules yet are
defined on simpler algebraic structures than C${}^*$--algebras.
The theorem says\,\footnote{\;This is in \S 6 of \cite{app}. Other
versions are in \cite{sz_bull}, \cite{sz_bull1}, \cite{sz_pams}
and \cite{ark}.} roughly that positive definiteness and the
boundedness condition (a) of Proposition \ref{t1.10.1} are
necessary and sufficient for an operator function to have a
dilation provided the $*$--semigroup $\sem$ has a unit. Addition
conclusions concern continuity and linearity properly understood
in the context of semigroups.

Let us exhibit the \underbar{diversity} of $*$--semigroups to
which the Sz.--Nagy general dilation theorem applies.
   \begin{enumerate}
   \item[$\triangleright$] \underbar{Groups} (commutative or not)
with involution $\sems^*\okr\sems^{-1}$. The boundedness condition
(a) of Proposition \ref{t1.10.1} turns into equality with
$c(\sems)=1$. Therefore the dilations are unitary representations
of groups, which happen in Harmonic Analysis. The case $\sem=\zzb$
is that of the 1953 frequently quoted Sz.--Nagy dilation theorem
for contractions, cf. \cite{app}, \S 4.
   \item[$\triangleright$] \underbar{Inverse}
\underbar{semigroups}. In this case the boundedness condition (d)
of Proposition \ref{t1.10.1} trivializes\,\footnote{\;Recall, to
$\sems\in\sem$ there is a unique $\sems^*$ such that
$\sems=\sems\sems^*\sems$ and $\sems^*=\sems^*\sems\sems^*$ },
which gives more prominence to Proposition \ref{t1.10.1}.
   \item[$\triangleright$] \underbar{$\sigma$--algebras} of sets
with set intersection as the semigroup operation and the identity
map as an involution. Again the boundedness condition (a) of
Proposition \ref{t1.10.1} turns into equality with $c(\sems)=1$.
This leads to Naimark's dilation of semispectral measures to
spectral ones, cf. \cite{app}, \S 2 and \cite{mlak}.
   \item[$\triangleright$] \underbar{Subnormality}. This is a 1950
invention of Halmos \cite{ha} who characterized it in terms of
positive definiteness plus some boundedness condition which, due
to Bram \cite{br}, turned out to be needless (see also
\cite{sz_pams} for another argument presented also in
\cite{nagy_rus}). It was Sz.--Nagy who put Halmos result into more
general framework, his general dilation theorem of \cite{app}, cf.
\S 5; another, quite different, kind of sentiment is exposed in
\cite{bill}. Nevertheless, subnormality can be considered also
from the C${}^*$--algebra point of view, see \cite{bunce} and
\cite{sz_pams1}. What is worthy to rescue is
 appearance of the unital $*$--semigroup $\nnb\times\nnb$ with
involution $(m,n)^*\okr(n,m)$ which makes all this possible.
   \item[$\triangleright$] \underbar{$*$--algebras}, in particular
\underbar{C${}^*$-algebras}. This case has been already discussed
on occasion of KSGSN.
   \item[$\triangleright$] \underbar{Moment} \underbar{problems}.
This is an extremely spectacular area rooted in Classical
Analysis. Here are three interesting cases \S 3
   \begin{itemize}
   \item $\sems\okr\nnb$ with identical involution. This is an
environment of the classical (one variable) moment problems, both
scalar and operator valued. Here positive definiteness is enough
for integral representation (read: dilatability). Any of the
boundedness conditions localizes the measure on a compact set.
   \item $\sems\okr\nnb^d$ with coordinate addition and identical
involution again. This is the very \underbar{sensitive} case, cf.
\cite{fu}, positive definiteness is only a necessary condition of
being a moment multisequence. Therefore, the boundedness condition
not only guarantees dilatability but also localizes the measure on
a compact set, again.
   \item The Sz.--Nagy semigroup mentioned on occasion of
subnormality is related to the complex moment problem, for more
consult \cite{polar}.
   \end{itemize}
   It seems to be needless to say that the above can be extended
to the Hilbert C${}^*$--module context as well.
   \end{enumerate}
   The careful reader has noticed that author's ambition here is
to \underbar{confront} complete positivity with positive
definiteness and to heighten awareness the latter suits more
situations in a rather elementary manner. So, the question
   turns up again: is the acronym KSGNS long enough? Maybe
   KS\colorbox{Light}{Sz.-N}GNS? The only thing against might be
   it, as a pictograph, to violate someone's aesthetical habits.
   Nevertheless, notice `Sz.' is from \colorbox{Light}{Sz.--Nagy}
   and seems to have nothing in common with the present author.

  \subsection*{Making it more spatial}
  Let us notice that for both Steinspring and Sz.--Nagy the common
target space for $\omega$ is the C${}^*$--algebra of bounded
operators on a Hilbert \underbar{space} which is a $\ccb$--module.
The difference is in the initial set $\sem$ for the kernel;
Sz.--Nagy's is the simplest possible, it does not bear any
unnecessary at the moment arrangement. On the other hand, for GNS
our choice of the target space in Theorem \ref{t1.24.2} fits in.
We may try to mend this incompleteness.

    Suppose $\sem$ is a $*$--semigroup and $\alga$ is a unital
C${}^*$--algebra; moreover, suppose $\omega$ satisfies (\taggw).

   \subsubsection*{Procedure \# 1}
   Fix a faithful $*$--representation $\pi$ of $\alga$ on a
Hilbert space $\hhc$. Then \eqref{1.3.3} can be written as an
equality for bounded operators on the Hilbert space $\hhc$
   \begin{equation*}
   \pi(\omega(\sems))=\pi(V)\gw\pi(\varPhi_\omega(\sems))\pi(V),
\quad \sems\in\sem
   \end{equation*}
   with $ \sems\mapsto{\pi(\varPhi_\omega(\sems))}$ being a
$*$--representation of $\sem$ on the Hilbert space $\hhc$.
   \subsubsection*{Procedure \# 2} Now $\eec$ is an arbitrary $\alga$--module.
For a ${\bf B}^*(\eec)$--function $\omega$ on $\sem$ we have a
kernel $K$ on $\sem\times\eec$ defined as
   \begin{equation*} 
K(\sems,\xi,\semt,\eta)\okr\is{\omega(\sems^*\semt)\xi}{\eta}_\eec,\quad
\sems,\semt\in\sem,\;\,\xi,\eta\in\eec.
   \end{equation*}
   Then $K$ becomes an $\alga$--valued kernel which is
$\sem$--invariant. Recall ${\bf B}^*(\eec)$--positive definiteness
of $\omega$ means
   \begin{equation} \label{2.3.3}
\sum\nolimits_{i,j}T_i\gw \omega(\sems_i^*\sems_j)T_j\Ge0,
\quad(\sems_m)_M\subset\sem,\;\,(T_n)_N\subset {\bf B}^*(\eec).
   \end{equation}
   To show $K$ is $\alga$--positive definite
choose\,\footnote{\;It looks like we have to accept existence of
such a $\xi$ as what is often called a technical assumption.}
$\xi\in\eec$ such that $\is\xi\xi_\eec={\sf e}$ and define
$T_i\okr T_{\xi,\xi_i\algaa_i}$ as in \eqref{2.31.1}. Then
$T_i\in{\bf B}^*(\eec)$ and $T_i\xi=\xi_i\algaa_i$
   \begin{align*}
\sum\nolimits_{i,j}\algaa_i^*K(\sems_i,\xi_i,\sems_j,\xi_j)\algaa_j
=&\sum\nolimits_{i,j}\algaa_i^*
\is{\omega(\sems_i^*\sems_j)\xi_i}{\xi_j}_\eec\algaa_j
   \\=&
\sum\nolimits_{i,j}
\is{\omega(\sems_i^*\sems_j)\xi_i\algaa_i}{\xi_j\algaa_j}_\eec
   \\=&
\sum\nolimits_{i,j}
\is{\omega(\sems_i^*\sems_j)T_i\xi}{T_j\xi}_\eec
   {\stackrel{{\scriptscriptstyle{\mathsf{
\eqref{2.3.3}}}}}{\Ge}}0
   \end{align*}
    Therefore $K$ is $\alga$--positive definite provided $\omega$
is ${\bf B}^*(\eec)$--positive definite. We can now develop the
reproducing kernel routine for $K$ distinguishing the resulting
elements of the construction by the subscript ${}_\omega$.

Notice $\sem$ acts on $S=\sem\times\eec$ exclusively through its
first variable, the second remains untouched.

   To avoid getting involved in nonunital dispute, which would not
be a disaster due to our already worked out tools like Proposition
\ref{t3.22.2}, assume $\sem$ is unital. Assume also $K({\sf
1},\xi,{\sf 1},\eta)=\is \xi\eta_\eec$ which defines immediately
an isometry $\funk V\xi{K_{{\sf 1},\xi}}$. Moreover, $V\in{\bf
B}^*(\eec,\eec_\omega)$ with
$\funk{V\gw}{K_{\sems,\eta}}{\omega(\sems^*)\eta}$. Indeed,
   \begin{align*}
   \is{V\xi}{K_{\sems,\eta}}_{\eec_\omega}=\is{K_{{\sf
1},\xi}}{K_{\sems,\eta}}_{\eec_\omega}=
\is{\omega(\sems)\xi}{\eta}_\eec
=\is{\xi}{\omega(\sems^*)\eta}_\eec.
   \end{align*}
   Putting together most of what we have experienced so far,
especially Corollary \ref{t2.2.1}, we come to the yet another
dilation result, one which reminds more those of Sz.--Nagy and
Steinspring.
   \begin{thm} \tlabel{t1.3.3}
   Let the $*$--semigroup $\sem$ and the C${}^*$--algebra $\alga$
be unital. Suppose $\eec$ is an an inner product $\alga$--module
and $\omega$ is an ${\bf B}^*(\eec)$--function on $\sem$ such that
   \begin{equation*}
   \omega({\sf 1})={\sf 1}_\eec.
   \end{equation*}
   There exist an isometry $V\in{\bf B}^*(\eec,\eec_\omega)$ and a
$*$--representation $\varPhi_\omega$ of $\sem$ on $\eec_\omega$
such that
   \begin{equation*}
\is\xi{\omega(\sems)\eta}_\eec=\is{V\xi}{\varPhi_\omega(\sems)V\eta}_{\eec_\omega},
\quad \sems\in\sem,\;\,\xi,\eta\in\eec
   \end{equation*}
   if and only if $\omega$ is ${\bf B}^*(\eec)$--positive definite
and satisfies the boundedness condition.

   \noindent The triple $(\eec_\omega,\varPhi_\omega,V)$ remains
minimal.
   \end{thm}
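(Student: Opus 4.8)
The plan is to derive Theorem \ref{t1.3.3} as a corollary of the machinery already assembled, chiefly Theorem \ref{t1.24.2} and Corollary \ref{t2.2.1}, applied not to $\omega$ itself but to the scalar ($\alga$-valued) kernel $K$ on $S=\sem\times\eec$ introduced just above the statement. First I would record what has already been verified in the running text: the normalization $\omega({\sf 1})={\sf 1}_\eec$ together with the technical choice of $\xi$ with $\is\xi\xi_\eec={\sf e}$ forces $K({\sf 1},\xi,{\sf 1},\eta)=\is\xi\eta_\eec$, so the map $V\colon\eec\ni\xi\mapsto K_{{\sf 1},\xi}\in\eec_\omega$ is an isometry, lands in ${\bf B}^*(\eec,\eec_\omega)$, and has adjoint $V\gw K_{\sems,\eta}=\omega(\sems^*)\eta$; this is exactly the displayed computation in the text preceding the theorem. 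Then I would invoke the $\alga$-positive definiteness of $K$ (proved in the displayed chain ending with $\stackrel{\eqref{2.3.3}}{\Ge}0$) to license the reproducing kernel construction of $\eec_\omega$, $\varPhi_\omega$, $\varPsi_\omega$ for $K$, with $\sem$ acting on $S$ through the first coordinate only.

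The substance of the proof is then the equivalence. For the forward (existence) implication one takes the asserted isometry $V$ and $*$-representation $\varPhi_\omega$ and reads the identity $\is\xi{\omega(\sems)\eta}_\eec=\is{V\xi}{\varPhi_\omega(\sems)V\eta}_{\eec_\omega}$ as a factorization in the style of \eqref{1.23.2}; the remarks right after \eqref{1.23.2} (together with \eqref{6.23.2}) give at once that $\omega$ is ${\bf B}^*(\eec)$-positive definite and that the boundedness condition (c) of Proposition \ref{t1.10.1} holds. For the converse I would proceed as follows. Since $\sem$ is unital, ({\taggw}) is automatic and the extension property is just positive definiteness, so Theorem \ref{t1.24.2}, applied to the kernel $K$ on $S$, produces $\varPhi_\omega\in{\bf B}^*(\eec_\omega)$ multiplicative and $*$-preserving (using $\sem$-invariance of $K$ via Corollary \ref{t2.2.2}) with the Kolmogorov-type decomposition of Corollary \ref{t2.2.1}:
\begin{equation*}
K(\sems\, u,\semt\, v)=\is{\varPhi_\omega(\sems)K_u}{\varPhi_\omega(\semt)K_v}_{\eec_\omega},\qquad u=({\sf 1},\xi),\ v=({\sf 1},\eta).
\end{equation*}
Specializing $u=({\sf 1},\xi)$, $v=({\sf 1},\eta)$ and unwinding the definition of $K$ gives
\begin{equation*}
\is\xi{\omega(\sems)\eta}_\eec=K({\sf 1},\xi,\sems,\eta)=\is{K_{{\sf 1},\xi}}{\varPhi_\omega(\sems)K_{{\sf 1},\eta}}_{\eec_\omega}=\is{V\xi}{\varPhi_\omega(\sems)V\eta}_{\eec_\omega},
\end{equation*}
where I use $K({\sf 1},\xi,\sems,\eta)=\is{\omega(\sems)\xi}{\eta}_\eec=\is\xi{\omega(\sems^*)\eta}_\eec$ and Hermitian symmetry to line up the variables; the boundedness condition for $\omega$ transfers to the boundedness condition \eqref{14.11.3} for $K$ exactly because $\sem$ acts through the first variable only, so Proposition \ref{t14.11.2} applies and $\varPhi_\omega(\sems)\in{\bf B}^*(\eec_\omega)$. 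Minimality of $(\eec_\omega,\varPhi_\omega,V)$ is then immediate: $\varPhi_\omega(\sems)V\xi=\varPhi_\omega(\sems)K_{{\sf 1},\xi}=K_{\sems,\xi}$ by \eqref{3.16.2}, and the $K_{\sems,\xi}\algaa$'s span $\ddc_\omega$, dense in $\eec_\omega$ by construction, just as in the last paragraph of the proof of Theorem \ref{t1.24.2}.

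The one genuinely delicate point — and the step I expect to be the main obstacle — is the bookkeeping around the distinguished vector $\xi$ with $\is\xi\xi_\eec={\sf e}$. The identification of $V$ as an isometry, and the claim $\eec_\omega\ni K_{{\sf 1},\xi}$, rely on this choice, and one must check that the resulting $V$, $\varPhi_\omega$ really do not depend on it (or, more honestly, that the dilation one gets is independent up to the obvious unitary equivalence), and that $V\gw$ is everywhere defined with the stated action on the dense set before one extends by continuity; this is the place where an otherwise routine transcription can quietly break down if $\eec$ is merely an inner product module rather than a self-dual one. I would dispatch this by noting that the computation $\is{V\xi}{K_{\sems,\eta}}_{\eec_\omega}=\is\xi{\omega(\sems^*)\eta}_\eec$ already pins down $V\gw$ on $\ddc_\omega$ uniquely, and then invoking the standard density argument (the boxed remark in the Comments after Proposition \ref{t1.31.1}) to get adjointability on all of $\eec_\omega$; boundedness of $V$ is free since it is an isometry. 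Everything else is the same ground-floor reproducing kernel calculus used repeatedly above, so I would keep the written proof short and point to Corollaries \ref{t2.2.1}, \ref{t2.2.2} and Theorem \ref{t1.24.2} rather than redo any of it.
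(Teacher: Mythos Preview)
Your approach is essentially identical to the paper's own: the paper does not give a separate proof of Theorem \ref{t1.3.3} but assembles it in the paragraphs of Procedure \#2 immediately preceding the statement, defining the $\alga$-valued kernel $K$ on $S=\sem\times\eec$, verifying its $\alga$-positive definiteness via the $T_{\xi,\xi_i\algaa_i}$ trick, building the isometry $V\colon\xi\mapsto K_{{\sf 1},\xi}$ with the stated adjoint, and then invoking ``most of what we have experienced so far, especially Corollary \ref{t2.2.1}'' to conclude. Your write-up fills in exactly these details; the one minor slip is that Theorem \ref{t1.24.2} is stated for $\alga$-functions on a $*$-semigroup rather than for kernels on a set $S$ with a semigroup action, so the clean references for the converse direction are Proposition \ref{t14.11.2} and Corollaries \ref{t2.2.1}, \ref{t2.2.2} applied directly to $K$ (which you also cite), not Theorem \ref{t1.24.2} itself.
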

   Theorem \ref{t1.3.3} is in \cite{itoh} exposed in a different
way.

   \subsubsection*{Advice}
   Combine Procedure \# 1 with Procedure \# 2 to get whatever is
possible. In particular, one may come closer to what is Section 3
of \cite{murphy}; let us mention that the reproducing kernel
construction in case of Hilbert C${}^*$--module valued kernels was
developed in \cite{itoh1}.

    \subsection*{Revitalizing  a question moved aside {\em or}
   a painful case} We have declared in \liczp 2 of Abstract to say
couple of words on whether a mathematical result survives or not
depends on an author rather than the result itself. This kind of
behaviour is in sharp contrast to unquestionable integrity of
Mathematics. The pretext for undertaking this `metamathematical'
topic, say, is what we have called the boundedness condition;
consult Proposition \ref{t1.10.1} for it and Remark \ref{t1.4.3}
for the source references. As we have had already pointed out its
importance is in the fact that it implies, via boundedness of the
dilation operators, some further properties like integrability in
the moment problem (and also as a kind of {\em bonus}: compactness
of the support of the representing measure). The story concerns
mainly condition (c) of Proposition \ref{t1.10.1} and goes as
follows. The condition (c) appeared in the 1977 paper
\cite{sz_pams} with $c$ ($\alpha$ there) to be submultiplicative.
Then, in 1984 two things happened: the paper \cite{b_m} and the
monograph \cite{berg_book} with $c$, called an absolute value
there. In \cite{b_m} $c$ satisfies a kind of `sub--C${}^*$'
condition, in \cite{berg_book}, p. 89, it is submultiplicative
plus some minor extra requirements. Both sources quote
\cite{sz_pams}, however, in \cite{b_m} the authors say on p. 167
`{\em Definition 1.2 is weaker than that given by Szafraniec ...}'
while in \cite{berg_book}, p. 141, one may find `{\em but somehow
similar conditions are implicit in Szafraniec (1977).}' Why
`weaker' or why 'implicit' is not clear, only the authors know. In
two notes quoted here as \cite{exposi} there is a thorough
discussion of 'weaker' put into a pretty much wider context,
interesting for itself too. The conclusion therein is this battle
is for nothing. On the other hand, in Proposition of
\cite{sz_pams} the condition in question is \underbar{explicitly}
stated as condition (ii). Does `implicit' mean `explicit' or the
other way? Nevertheless, so far it looks like it is nothing to
quarrel with but once the seeds has been already sowed all the
rest has been developing drastically. Among 14 papers reported in
MathSciNet as quoting \cite{b_m} only one, that of \cite{gl}, does
justice. The last of those fourteen \cite{resel} makes even the
title `politically correct'. How has it gone this way? Maybe
because already the 1987 survey paper \cite{berg} was quit of the
reference \cite{sz_pams}? Does it happen to be accidental?

   So why those bitter words are here? Certainly, because the
author has been waiting long enough for the people to get aware of
their sins. Also because these things happen more frequently than
one might think of and there is no forum for them to be weighed in
though sometimes, not too often, one may find journal notes
entitled {\em Acknowledgment of priority \ldots}

Dear PT Reader, please forgive us that if you find it
inappropriate!

   \section*{Some more author's personal remarks} In the vast
literature of the subject there certainly are better, further
going and mountainously more involved results than these here.
Being an outsider, under the pressure of time, the author has been
unable to penetrate all the writings; the owners of those results
are asked to forgive him any pain caused by not to be
quoted\,\footnote{\;The author (he is an outsider, remember
please) has just learned about \cite{f-g} where an unbelievable
number of bibliographical items has been classified, 1297 till the
12th of November 2008.}. However he does hope to resume the
search, always with \underbar{simplicity} and
\underbar{temperance} as a priority.

Getting
   older the author more
   and more understands and appreciates what his Master in
Mathematics, Professor Tadeusz Wa\.zewski \cite{waz}, used to mean
saying {\em parasite associations}. Unfortunately, they have been
spreading over and over pretty often driving the natural beauty
 out of Mathematics. This essay's intension is to demonstrate an
attempt at slowing down that overwhelming drift. As already
mentioned the author is determined to continue elsewhere his
efforts of clarifying the topic with emphasis on {\em what is
responsible for what}. As {\em organic} food or wine is
approaching everyday life the time comes for bringing this
environmental idea into Mathematics too.

   \bibliographystyle{amsplain}
   
   \end{document}